\newtheorem{theorem}{Theorem}
\newtheorem{definition}[theorem]{Definition}
\newtheorem{example}[theorem]{Example}
\newtheorem{lemma}[theorem]{Lemma}
\newenvironment{proof}[1][Proof]{\noindent\textbf{#1.} }{\ \rule{0.5em}{0.5em}}
\begin{document}

\title{An Existence Result for a Stochastic Stefan Problem With Mushy Region and Turbulent Transport Noise}
\author{Ioana CIOTIR\footnote{Normandie University, INSA de Rouen Normandie, LMI (EA 3226 – FR CNRS 3335), 76000 Rouen, France, {\tt\small ioana.ciotir@insa-rouen.fr}}
\and
Franco FLANDOLI\footnote{Scuola normale superiore di Pisa, Piazza dei Cavalieri, 7, 56126 Pisa PI, Italie {\tt\small franco.flandoli@sns.it}}
\and
Dan GOREAC\footnote{\'{E}cole d'Actuariat, Universit\'{e} Laval, QC G1V0B3, Qu\'{e}bec, Canada {\tt\small dan.goreac@act.ulaval.ca} and LAMA, Univ Gustave Eiffel, UPEM, Univ Paris Est Creteil, CNRS, F-77447 Marne-la-Vall\'{e}e, France, {\tt\small dan.goreac@univ-eiffel.fr}}
}
\maketitle

\begin{abstract}
This work is devoted to the proof of the existence of a martingale solution for a complex version of the stochastic Stefan problem. This particular formulation incorporates two important features: a mushy region and turbulent transport within the liquid phase. While our approach bears similarities to porous media equations, it differs in a crucial aspect. Instead of using the typical framework for such equations, we have chosen to work within an $L^2$ space. This choice is motivated by the nature of the operator that characterizes the turbulent noise in our model. The $L^2$ space provides a more natural and appropriate setting for handling this specific operator, allowing us to better capture and analyze the turbulent transport phenomena in the liquid phase of the Stefan problem.
\end{abstract}

\textbf{Keywords:} Stefan problem, maximal monotone operators, turbulence, transport noise

\textbf{MSC (2020):} 60H15, 80A22, 76D03

\section{Introduction}

\bigskip

\subsection{Construction of the model}

\bigskip
In the present work, we investigate a free boundary problem that describes the melting/solidification process in a turbulent fluid, in the presence of a heating source generically denoted by $F$. We consider a two-phase Stefan problem set on a bounded open domain $\mathcal{O} \subset \mathbb{R}^d$, where $d = 1, 2,$ or $3$. This domain is assumed to have a smooth boundary. A crucial aspect of our model is the phase-transition region, which is characterized as a \emph{mushy} zone, connecting the solid and liquid phases.\\

We shall first consider the following sharp Stefan-type problem:%
\begin{equation}
\left\{ 
\begin{array}{ll}
C_{1}\dfrac{\partial \theta }{\partial t}-{div}\left( k_{1}\nabla
\theta \right) +u\cdot \nabla \eta \left( \theta \right) =F,\medskip & \quad 
\text{if }\theta <0, \\ 
C_{2}\dfrac{\partial \theta }{\partial t}-{div}\left( k_{2}\nabla
\theta \right) +u\cdot \nabla \eta \left( \theta \right) =F,\medskip & \quad 
\text{if }\theta >0, \\ 
\left( k_{2}\nabla \theta ^{+}-k_{1}\nabla \theta ^{-}\right) \cdot N_{\xi
}^{-}=l\cdot N_{t},\medskip & \quad \text{on }\{\theta =0\}, \\ 
\theta ^{+}=\theta ^{-}=0,\medskip & \quad \text{on }\{\theta =0\}, \\ 
\theta \left( 0,\xi \right) =\theta _{0}\left( \xi \right) ,\medskip & \quad 
\text{in }\mathcal{O}, \\ 
\theta \left( t,\xi \right) =0, & \quad \text{on }\partial \mathcal{O\times }%
\left( 0,T\right) ,%
\end{array}%
\right.  \label{eq1}
\end{equation}%
where $\theta ^{+}$, resp. $\theta ^{-}$ are the right, resp. left limits of the
\emph{free boundary} situated between the solid and the liquid phase, $N\left(
t,\xi \right) $ is the \emph{unit normal to the interface} and $l$ is assumed to be
the \emph{latent heat}.

We denote by $k_{1}$ and $k_{2}$ the \emph{thermal conductivity} of the solid and
liquid phases, and by $C_{1}$ and $C_{2}$ the \emph{specific heat} of the two
phases.\par
The function $\eta :\mathbb{R\rightarrow R}$ is assumed to be a smooth function which vanishes in the solid phase, and such
that $\eta \left( 0\right) =0$. We assume that $\left\vert \eta ^{\prime }\left(
r\right) \right\vert \leq L,$ for $\forall r\in \mathbb{R}$. From the physical point of view, it is coherent to strengthen the null-behavior, and further assume that \[\eta \left(
\theta \right) =0,\textnormal{ for }0<\theta <\varepsilon.\]
Since the term $u\cdot \nabla \eta \left( \theta \right) $ is meant to model the
\emph{turbulence} present in the liquid phase, the physical interpretation of $\eta 
$ is that the solid phase is not allowed to move and neither is a small
liquid region close to the boundary.

Concerning the \emph{velocity} of the fluid, a reasonable model for the turbulent fluid
has the form 
\begin{equation*}
u\left( t,\xi \right) =\sum_{k=1}^{\infty }\alpha _{k}\sigma _{k}(\xi)\frac{%
d\beta _{k}(t)}{dt},
\end{equation*}%
where 
\begin{itemize}
    \item $\left\{ \alpha _{k}\right\} _{k}$ is a sequence of positive constants
conveniently chosen; 
\item $\left\{ \sigma _{k}\right\} _{k}$ is a sequence of divergence-free
smooth vector fields whose properties will be defined later on, and
\item $\left\{ \beta
_{k}\right\} _{k}$ is a sequence of independent Brownian motions.
\end{itemize}

Interpreted in the \emph{Stratonovich sense}, the turbulence has the following formulation%
\begin{equation*}
u\left( t,\xi \right) =\sum_{k=1}^{\infty }\alpha _{k}\sigma _{k}(\xi)\circ
d\beta _{k}(t),
\end{equation*}%
which leads to the following explicit formulation for the turbulence%
\begin{equation}
u\cdot \nabla \eta \left( \theta \right) =\sum_{k=1}^{\infty }\alpha
_{k}\sigma _{k}\cdot \nabla \eta \left( \theta \right) \circ d\beta
_{k} .  \label{noise}
\end{equation}
The Stratonovich integrals are used, in accordance with the Wong-Zakai principle (see rigorous results in \cite{Debussche}). As mentioned above, the heuristic idea is that turbulence can appear in the liquid region of the phase change problem due to the difference in temperature between the two phases.

This type of noise has been introduced and intensively studied during the recent years. The reader is invited to refer to  \cite{Flandoli-book}, \cite{Flandoli5}, \cite{Flandoli1}, \cite{Flandoli2}, \cite{Flandoli-Pappalettera}, among other references.

\bigskip

\subsection{The Rigorous Construction of the Noise}
In order to study the equation, we make the following rigorous construction
of the turbulence term. Let $\left\{ e_{k}\right\} _{k}$\ be a complete
orthonormal basis in $L^{2}\left( \mathcal{O}\right) $ formed by the
eigenfunctions of the Dirichlet homogeneous Laplace operator on $\mathcal{O}$%
, with $\left\{ \lambda _{k}\right\} _{k}$ being the corresponding eigenvalues,
i.e., 
\begin{equation*}
-\Delta e_{k}=\lambda _{k}e_{k},\quad \forall k\in \mathbb{N}^*\text{.}
\end{equation*}

\noindent The reader is recalled that, by standard estimates, there exists a constant $C>0$ such that $\left\vert e_{k}\right\vert _{\infty }\leq C\lambda _{k}$, and that the constant $C$ is independent of $k$.

We further consider $\left\{ \mu _{k}\right\} _{k}$ to be a sequence of divergence-free
vectors belonging to $\left( C^{\infty }\left( \mathcal{O}\right) \right) ^{d}$, and such
that $${div}\left( \mu _{k}e_{k}\right) =0$$.

\medskip

We define the operator 
\begin{equation*}
B:H_{0}^{1}\left( \mathcal{O}\right) \rightarrow L_{2}\left( L^{2}\left( 
\mathcal{O}\right) ;L^{2}\left( \mathcal{O}\right) \right)
\end{equation*}%
where%
\begin{eqnarray*}
B\left( \theta \right) &:&L^{2}\left( \mathcal{O}\right) \rightarrow
L^{2}\left( \mathcal{O}\right) \\
B\left( \theta \right) \left( \varphi \right) &=&\sum_{k=1}^{\infty }\alpha
_{k}\sigma _{k}\cdot \nabla \eta \left( \theta \right) \left( e_{k},\varphi
\right) _{2}
\end{eqnarray*}%
where $\left\{ \alpha _{k}\right\} _{k}$ is a sequence of real values and $%
\left\{ \sigma _{k}\right\} _{k}$ is a sequence of vectors of the type 
\begin{equation*}
\sigma _{k}=\mu _{k}e_{k},\quad \forall k=\overline{1,\infty }.
\end{equation*}

Under suitable assumptions mentioned below, one can see that $B$ is well
defined from $H_{0}^{1}\left( \mathcal{O}\right) $ to the space of
Hilbert-Schmidt operators $L_{2}\left( L^{2}\left( \mathcal{O}\right)
;L^{2}\left( \mathcal{O}\right) \right) $.  

We have%
\begin{eqnarray*}
\left\Vert B\left( \theta \right) \right\Vert _{L_{2}\left( L^{2}\left( 
\mathcal{O}\right) ;L^2\left( \mathcal{O}\right) \right) }^{2}
&=&\sum_{k=1}^{\infty }\left\vert \alpha _{k}\sigma _{k}\cdot \nabla \eta
\left( \theta \right) \right\vert _{2}^{2} \\
&=&\sum_{k=1}^{\infty }\left\vert \alpha _{k}\right\vert ^{2}\left\vert
e_{k}\left\langle \mu _{k},\nabla \eta \left( \theta \right) \right\rangle _{%
\mathbb{R}^{d}}\right\vert _{2}^{2} \\
&\leq&\sum_{k=1}^{\infty }\left\vert \alpha _{k}\right\vert ^{2}\left\vert
\left\vert e_{k}\mu _{k}\right\vert _{\mathbb{R}^{d}}\left\vert \eta
^{\prime }\left( \theta \right) \nabla \theta \right\vert _{\mathbb{R}%
^{d}}\right\vert _{2}^{2} \\
&\leq &L^2C^2\sum_{k=1}^{\infty }\left\vert \alpha _{k}\right\vert
^{2}\left\vert \lambda _{k}\right\vert ^{2}\left\vert \mu _{k}\right\vert
_{\left( L^{\infty }\left( \mathcal{O}\right) \right) ^{d}}^{2}\left\vert
\left\vert \nabla \theta \right\vert _{\mathbb{R}^{d}}\right\vert _{2}^{2} \\
&\leq &L^2C^2\sum_{k=1}^{\infty }\left\vert \alpha _{k}\right\vert
^{2}\left\vert \lambda _{k}\right\vert ^{2}\left\vert \mu _{k}\right\vert
_{\left( L^{\infty }\left( \mathcal{O}\right) \right) ^{d}}^{2}\left\vert
\theta \right\vert _{H_{0}^{1}\left( \mathcal{O}\right) }^{2}.
\end{eqnarray*}

\noindent We consider a \emph{cylindrical Wiener process} in $L^{2}\left( \mathcal{O}\right) $
constructed with respect to the aforementioned basis
\begin{equation*}
dW\left( t\right) =\sum_{k=1}^{\infty }e_{k}d\beta _{k}\left( t\right)
\end{equation*}%
where $\left\{ \beta
_{k}\right\} _{k}$ is a sequence of independent standard Brownian motions on a
filtered probability space $\left( \Omega ,\mathcal{F},\left( \mathcal{F}%
_{t}\right) _{t\geq 0},\mathbb{P}\right) .$

As we have already mentioned, the noise term of the equation is meant in the Stratonovich sense, i.e., (\ref{noise}) can be
written as 
\begin{equation*}
B\left( \theta \right) \circ dW\left( t\right) =\sum_{k=1}^{\infty }\alpha
_{k}\sigma _{k}\cdot \nabla \eta \left( \theta \right) \circ d\beta
_{k}\left( t\right),
\end{equation*}%
which means that the noise (\ref{noise}) can be written as%
\begin{equation*}
u\cdot \nabla \eta \left( \theta \right) =B\left( \theta \right) \circ
dW\left( t\right) .
\end{equation*}

In order to study the equation, we can transform the Stratonovich
integral into a It\^{o} one, whichever is more convenient for differential formulations:
\begin{eqnarray*}
B\left( \theta \right) \circ dW\left( t\right) &=&\sum_{k=1}^{\infty }\alpha
_{k}\sigma _{k}\cdot \nabla \eta \left( \theta \right) d\beta _{k}\left(
t\right) \\
&&-\frac{1}{2}\sum_{k=1}^{\infty }\alpha _{k}^{2}{div}\left[ \left(
\eta ^{\prime }\left( \theta \right) \right) ^{2}\sigma _{k}\otimes \sigma
_{k}\nabla \theta \right] dt.
\end{eqnarray*}%
The corresponding It\^{o} integral has the following form%
\begin{equation*}
B\left( \theta \right) dW\left( t\right) =\sum_{k=1}^{\infty }\alpha
_{k}\sigma _{k}\cdot \nabla \eta \left( \theta \right) d\beta _{k}\left(
t\right) ,
\end{equation*}%
followed by a correction term.

Again, by assuming that $\left\{ \alpha
_{k}\right\} _{k}$ contains elements that are small enough, we are able to denote by%
\begin{equation}\label{Q}
Q\left( \xi \right) =\sum_{k=1}^{\infty }\alpha _{k}^{2}\left( \sigma
_{k}\left( \xi \right) \otimes \sigma _{k}\left( \xi \right) \right) .
\end{equation}The elements $\sigma_k$ are now considered small enough to guarantee an absolute convergence in the previous expression.

In order to facilitate the reading of this paper, we denote the matrix
operator $Q=\left( q_{i,j}\right) _{1\leq i,j\leq d}$ where each $q_{i,j}$
is a series in $k$ which converges under the assumptions mentioned at the end
of the introduction.

On the other hand, we introduce the real function 
\begin{equation*}
g\left( x\right) =\frac{1}{2}\int_{0}^{x}\left( \eta ^{\prime }\left(
r\right) \right) ^{2}dr,\quad \forall x\in \mathbb{R}\text{,}
\end{equation*}%
and we rewrite the noise as%
\begin{equation}\label{B_Qg}
B\left( \theta \right) \circ dW\left( t\right) =\sum_{k=1}^{\infty }\alpha
_{k}\sigma _{k}\cdot \nabla \eta \left( \theta \right) d\beta _{k}\left(
t\right) -{div}\left[ Q\nabla g\left( \theta\right) %
\right] dt.
\end{equation}

\begin{example}
\label{example1}Let us consider an example showing how a sequence of functions $\left\{ \mu _{k}\right\} _{k}$
can be constructed such that the associated $\sigma_k$ obey to the previously-introduced assumptions. To this purpose, we let $\mathcal{O\subset }\mathbb{R}^{3}$, we denote by $\xi=(\xi_1,\xi_2,\xi_3)$ the associated components, and set \[%
\mu _{k}=\left( \frac{\partial}{\partial _{\xi_2}}e_{k},-\frac{\partial}{\partial _{\xi_1}}e_{k},0\right), \] for each $%
k\in \mathbb{N}^*$. The divergence condition is then easily checked. Furthermore, owing to the boundary behavior of the elements of the basis $e_k$, it follows that \[N_\xi\cdot \sigma_k=0.\]
This is particularly useful to get rid of boundary contributions when employing Green's formula.\\
We need to assume, in addition to the Hypotheses above, that
the elements of the sequence $\left\{ \alpha _{k}\right\} _{k}$ are small
enough such that the following series are convergent, for every $i,j\in\{1,\ldots,d\}$,%
\begin{equation*}
\sum_{k=1}^{\infty }\alpha _{k}^{2}\left( \frac{\partial }{\partial _{\xi
_{i}}}e_{k}^{2}\right) \mu _{k}^{i}\mu _{k}^{j}+\sum_{k=1}^{\infty }\alpha
_{k}^{2}e_{k}^{2}\left( \frac{\partial }{\partial _{\xi _{i}}}\mu
_{k}^{i}\right) \mu _{k}^{j}+\sum_{k=1}^{\infty }\alpha _{k}^{2}e_{k}^{2}\mu
_{k}^{i}\left( \frac{\partial }{\partial _{\xi _{i}}}\mu _{k}^{j}\right)
<\infty .
\end{equation*}

By tempering with the bounds on the coefficients $(\alpha _{k})_{k}$, 
one further guarantees that the assumptions \eqref{Ip1} and \eqref{Ip2} below are satisfied.
\end{example}
\bigskip

\subsection{The Main Assumptions}

Throughout this paper we shall assume that the elements of the sequence $%
\left\{ \alpha _{k}\right\} _{k}$ are small enough such that the following
series are converging

\begin{itemize}
\item For the well-posedness of the operator $B$ from the noise we assume
that%
\begin{equation}
\sum_{k=1}^{\infty }\left\vert \alpha _{k}\right\vert ^{2}\left\vert \lambda
_{k}\right\vert ^{2}\left\vert \mu _{k}\right\vert _{\left( L^{\infty
}\left( \mathcal{O}\right) \right) ^{d}}^{2}\leq C_{1}<\infty ,  \label{Ip1}
\end{equation}%
for some constant $C_{1}$.

\item For the well-posedness of the operator $Q=\left( q_{ij}\right)
_{1\leq i,j\leq d}$ (with each $q_{ij}$ a series in $k$) which appears in
the It\^{o}-Stratonovich correction term, we assume that 
\begin{equation}
\sum_{k=1}^{\infty }\alpha _{k}^{2}\left( \sigma _{k}\left( \xi \right)
\otimes \sigma _{k}\left( \xi \right) \right) \ \text{are convergent for almost every $\xi\in\mathcal{O}$.}
\label{Ip2}
\end{equation}
\item For the well-posedness of the problem we further assume that%
\begin{equation}
\gamma =\underset{i,j=\overline{1,d}}{\max }\left\{ \left\vert
q_{ij}\right\vert _{\infty }+\left\vert \frac{\partial q_{ij}}{\partial \xi
_{i}}\right\vert _{\infty }\right\} <\infty .  \label{Ip3}
\end{equation}%
and the matrix $Q$ is positively defined, i.e. 
\begin{equation*}
a^{t}Qa\geq 0,\quad \forall a\in \mathbb{R}^{d}\text{.}
\end{equation*}
Although clear enough, let us mention that $\mid{\cdot}\mid_\infty$ denotes the $L^\infty(\mathcal{O})$ norm.
\item We assume that $\widetilde{\gamma}$ and its inverse are smooth and $0\leq \left( \widetilde{\gamma }^{-1}\right) ^{\prime
}\leq 1$.

\end{itemize}

\subsection{The Mathematical Model}

Under the assumptions above we can rigorously write
the system (\ref{eq1}) as follows
\begin{equation}
\left\{ 
\begin{array}{ll}
C_{1}d\theta -{div}\left( k_{1}\nabla \theta \right) dt-{div}\left[
Q \nabla g\left( \theta \right) \right] dt+B\left( \theta
\right) dW\left( t\right) =F,\medskip & \text{if }\theta <0, \\ 
C_{2}d\theta -{div}\left( k_{2}\nabla \theta \right) dt-{div}\left[
Q \nabla g\left( \theta \right) \right] dt+B\left( \theta
\right) dW\left( t\right) =F,\medskip & \text{if }\theta >0, \\ 
\left( k_{2}\nabla \theta ^{+}-k_{2}\nabla \theta ^{-}\right) \cdot N_{\xi
}^{-}=l\cdot N_{t},\medskip & \text{on }\theta =0, \\ 
\theta ^{+}=\theta ^{-}=0,\medskip & \text{on }\theta =0, \\ 
\theta \left( 0,\xi \right) =\theta _{0}\left( \xi \right) ,\medskip & \text{%
in }\mathcal{O}, \\ 
\theta \left( t,\xi \right) =0, & \text{on }\partial \mathcal{O\times }%
\left( 0,T\right) .%
\end{array}%
\right.  \label{eq2}
\end{equation}

In order to study the existence and uniqueness of a solution and the system
above, we shall first write it as a nonlinear multivalued problem of
monotone type%
\begin{equation}
\left\{ 
\begin{array}{ll}
d\gamma \left( \theta \right) -{div}\left( k\left( \theta \right)
\nabla \theta \right) dt-{div}\left( Q \nabla g\left(
\theta \right) \right) dt+B\left( \theta \right) dW\left( t\right)
=F,\medskip & \mathcal{O\times }\left( 0,T\right) , \\ 
\theta \left( 0,\xi \right) =\theta _{0}\left( \xi \right) ,\medskip & 
\mathcal{O}, \\ 
\theta \left( t,\xi \right) =0, & \partial \mathcal{O\times }\left(
0,T\right) .%
\end{array}%
\right.  \label{eq3}
\end{equation}%
where $\gamma \left( r\right) =C\left( r\right) +l\times H\left( r\right) $
with 
\begin{equation*}
C\left( r\right) =\left\{ 
\begin{array}{cc}
C_{1}r, & r\leq 0, \\ 
C_{2}r, & r>0.%
\end{array}%
\right.
\end{equation*}%
The function $H$ is a Heaviside contribution. Furthermore, we consider  
\begin{equation*}
k\left( r\right) =\left\{ 
\begin{array}{cc}
k_{1}, & r\leq 0, \\ 
k_{2}, & r>0.%
\end{array}%
\right.
\end{equation*}
At this point, by formally using the change of variable ${\gamma }\left( \theta \right) =X$,\
we can rewrite the equation above as%
\begin{equation}
\left\{ 
\begin{array}{ll}
\begin{array}{l}
dX_0-\Delta \Psi_0 \left( X_0\right) dt-{div}\left( Q
\nabla g\left( {\gamma }^{-1}\left( X_0\right) \right) \right)
dt\medskip \\ 
\quad \quad \quad \quad \quad \quad \quad \quad \quad \quad \quad \quad
+B\left( {\gamma }^{-1}\left( X_0\right) \right) dW\left( t\right)
=F,\medskip%
\end{array}
& \mathcal{O\times }\left( 0,T\right) , \\ 
X_0\left( 0,\xi \right) ={\gamma }\left( \theta _{0}\right) \left(
\xi \right) \overset{not}{=}x,\medskip & \mathcal{O}, \\ 
X_0\left( t,\xi \right) =0, & \partial \mathcal{O\times }\left( 0,T\right) ,%
\end{array}%
\right.  \label{eq4}
\end{equation}%
where 
\begin{equation*}
\Psi_0 \left( r\right) =\left\{ 
\begin{array}{ll}
k_{1}C_{1}^{-1}r,\medskip & r\leq 0, \\ 
0,\medskip & r\in (0,l),\\
k_{2}C_{2}^{-1}(r-l) , & r\geq l.%
\end{array}%
\right.
\end{equation*}

Since, from a physical point of view, at the interface between ice and water
is not sharp, it is reasonable to assume the presence of a \emph{mushy region}
which is mathematically take into account by replacing the Heaviside function with a smoothed one
denoted by $\widetilde{H}$. With the same reasoning, we  replace the function $\gamma $ with the
corresponding $\widetilde{\gamma }$. This is done in order for the corresponding $\Psi$ to be \emph{strictly monotone} and of class $C^1$.

The new equation is 
\begin{equation}
\left\{ 
\begin{array}{ll}
\begin{array}{l}
dX-\Delta \Psi \left( X\right) dt-{div}\left( Q
\nabla g\left( \widetilde{\gamma }^{-1}\left( X\right) \right) \right)
dt\medskip \\ 
\quad \quad \quad \quad \quad \quad \quad \quad \quad \quad \quad \quad
+B\left( \widetilde{\gamma }^{-1}\left( X\right) \right) dW\left( t\right)
=F,\medskip%
\end{array}
& \mathcal{O\times }\left( 0,T\right) , \\ 
X\left( 0,\xi \right) =\widetilde{\gamma }\left( \theta _{0}\right) \left(
\xi \right) \overset{not}{=}x,\medskip & \mathcal{O}, \\ 
X\left( t,\xi \right) =0, & \partial \mathcal{O\times }\left( 0,T\right) ,%
\end{array}%
\right.  \label{eq4}
\end{equation}%
where 
$\Psi$ is a smoothening version (obtained, for instance, by reasonable convolution/mollification) in such a way that the function $\Psi $ is null at $0$, strictly monotone, which means that there
exist a positive constant $\psi _{0}$ such that%
\begin{equation*}
\left( \Psi \left( x\right) -\Psi \left( y\right) \right) \left( x-y\right)
\geq \psi _{0}\left\vert x-y\right\vert ^{2},\quad \forall x,y\in \mathbb{R}%
\text{.}
\end{equation*}

Throughout the paper, we denote by $C$ a generic constant which is independent of the
approximation parameters, and which is allowed to change from one line to another.\\

The reader is invited to note that, from a technical point of view, the main difficulty of this problem comes
from the presence of the Stratonovich-type turbulent noise which is well
posed in $L^{2}$, while the rest of the equation is well posed in $H^{-1}$, where the monotonicity of the $\Delta\Psi$ operator is usually satisfied.\\

In the present work we give an existence result of a martingale solution
for the equation by using an $L^{2}$ setting, and our result covers the physically-motivated model explained above.\\

Most of the previously existing results concerning the porous media type equation, asserted in
an $H^{-1}$ or $L^{1}$ setting, concern fundamentally different frameworks, and do not include divergence-type noise. The reader is referred to \cite{PM} for the general results and \cite{barbu-ln}, \cite{eu-superfast}, 
\cite{eu-dan-reika}, \cite{gess-soc} for some critical cases.
Some cases involving divergence-type noise for porous media were studied
also in $H^{-1}$ under stronger assumptions, in particular pertaining to the \emph{linearity} of the noise coefficient, and by using different technical tools. To our best knowledge, the existing literature in this direction reduces to \cite{turra} and \cite{eu-ito-div}. 

In comparison with previous results, e.g., \cite{eu-ito-div}, in our case the noise is of \emph{non-linear} nature, thus constituting a further difficulty, besides its higher relevance for physical models. As a requirement stemming from the non-linear nature of the noise coefficient, we construct the noise itself (through the $\sigma$ requirements) in a somewhat different way than the one in \cite{eu-ito-div}.
On the other hand, the result in \cite{eu-ito-div} provides the existence of
a martingale solution, which is also weak from a variational point of view,
for the porous media equation with still linear divergence It\^{o}-type noise in $H^{-1}$.

The result in
\cite{turra} in Stratonovich case, imposes the presence in the equation of
an extra term having a Laplace form which is assumed to dominate completely the
noise in the sense that it is assumed to be large enough to compensate the
influence of the noise term, and to further compensate the influence of the It\^{o}-Stratonovich
correction term. As a consequence of this rather involved and strong assumption, the mentioned reference can
treat the equation in an $H^{-1}$ setting (as one usually does for porous-media), and to get a stochasticaly and variationally strong
solution. Perhaps it is worth mentioning that while we do require strict monotonicity of the underlying porous-media operator $\Delta\Psi$, no assumption is made on the lower bounds of the monotonicity constant designed (as it was the case in \cite{turra}) to dominate the noise contribution. 

~\\


\bigskip

\section{Existence of the solution}

\bigskip

This section, constituting the core of our mathematical contribution, aims at providing the existence of the solution for the
equation (\ref{eq4}), solution formulated in the following sense.

\bigskip

\begin{definition}\label{DefSol}
Let $x\in L^{2}\left( \mathcal{O}\right) .$ We say that equation (\ref{eq4})
has a weak solution if 
there exist 
\begin{enumerate}
    \item a filtered reference probability space $\left(
\Omega ,\mathcal{F},\left( \mathcal{F}_{t}\right) _{t\geq 0},\mathbb{P}\right) $,
\item 
a sequence of independent $\mathcal{F}_{t}$\ Brownian motions $\left\{ \beta
_{k}\right\} _{k}$, and 
\item an $H^{-1}\left( \mathcal{O}\right) -$valued
continuous $\mathcal{F}_{t}-$adapted process $X$ such that 
\begin{enumerate}
    \item $X\in L^{2}\left(
\Omega \times \left( 0,T\right) \times \mathcal{O}\right) $, and 
\item the following holds true \begin{eqnarray*}
\left( X\left( t\right) ,e_{j}\right) _{2} &=&\left( x,e_{j}\right)
_{2}+\int_{0}^{t}\int_{\mathcal{O}}F(s)e_jd\xi ds+\int_{0}^{t}\int_{\mathcal{O}}\Psi \left( X\left( s\right) \right)
\Delta e_{j}d\xi ds \\
&&\int_{0}^{t}\int_{\mathcal{O}}g\left( \widetilde{\gamma }%
^{-1}\left( X\right) \right) {div}\left[ Q\nabla
e_{j}\right] d\xi ds \\
&&+\sum_{k=1}^{\infty }\int_{0}^{t}\alpha _{k}\left( \eta \left( \widetilde{%
\gamma }^{-1}\left( X\right) \right) ,\sigma _{k}\cdot \nabla e_{j}\right)
_{2}d\beta _{k}\left( s\right) ,
\end{eqnarray*}%
$\mathbb{P}$-a.s., for all $t\in[0,T]$, and for all $j\in \mathbb{N}^*$, where $\left\{ e_{j}\right\} $ is the
orthonormal basis in $L^{2}\left( \mathcal{O}\right) $ as introduced before.
\end{enumerate}
\end{enumerate}
\end{definition}

\bigskip
We now come to the main result of the paper.
\begin{theorem}\label{ThMain}
For each $x\in L^{2}\left( \mathcal{O}\right) $, there exists a solution to
the equation (\ref{eq4}), in the sense of the Definition \ref{DefSol}, and such that \[
\Psi \left( X\right) \in L^{2}\left( \Omega \times \left( 0,T\right) \times 
\mathcal{O}\right) .\]
\end{theorem}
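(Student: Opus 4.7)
The plan is a classical Galerkin plus stochastic compactness argument, whose analytic heart is a structural cancellation between the Itô-Stratonovich correction drift and the quadratic variation of the transport noise. The three inputs making this cancellation sign-definite are the positive semi-definiteness of $Q$, the contraction property $0\leq(\widetilde\gamma^{-1})'\leq 1$, and the fact that $g'=\tfrac12(\eta')^2$ is precisely the density appearing in $\|B(\theta)\|_{L_2}^2$.

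\textbf{Step 1 (Finite-dimensional approximation).} Let $P_n$ be the $L^2(\mathcal{O})$-projection onto $H_n:=\mathrm{span}\{e_1,\ldots,e_n\}$ and, simultaneously, truncate the noise at level $n$. Writing $X^n(t)=\sum_{j=1}^n c_j^n(t)e_j$ and testing the projected equation against $e_1,\ldots,e_n$ produces a finite-dimensional Itô system. The Lipschitz character of $\Psi$, $\widetilde\gamma^{-1}$, $\eta$ and $g'$, together with \eqref{Ip1}--\eqref{Ip3}, yields unique strong solutions $X^n$ on the original stochastic basis.

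\textbf{Step 2 (Uniform estimates and the key cancellation).} Applying Itô's formula to $|X^n(t)|_2^2$ and using $X^n\in H_0^1$ to integrate by parts, the $\Delta\Psi$-contribution is bounded by $-2\psi_0\int_0^t|\nabla X^n|_2^2\,ds$; the forcing is absorbed via Young's inequality; while, writing $\theta^n:=\widetilde\gamma^{-1}(X^n)$, a direct computation gives the pointwise identity
\begin{equation*}
2\bigl(X^n,\mathrm{div}(Q\nabla g(\theta^n))\bigr)_2+\|B(\theta^n)\|_{L_2}^2=\int_{\mathcal{O}}\bigl(1-\widetilde\gamma'(\theta^n)\bigr)(\eta'(\theta^n))^2\,(\nabla\theta^n)^t Q\,\nabla\theta^n\,d\xi\leq 0,
\end{equation*}
the non-positivity resulting from $\widetilde\gamma'\geq 1$ and $Q\geq 0$. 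The BDG inequality on the stochastic term plus Gronwall then delivers the uniform bound
\begin{equation*}
\mathbb{E}\Bigl[\sup_{t\in[0,T]}|X^n(t)|_2^2+\int_0^T|\nabla X^n(s)|_2^2\,ds\Bigr]\leq C,
\end{equation*}
from which, using the boundedness of $\Psi'$, one also deduces $\mathbb{E}\int_0^T|\Psi(X^n)|_{H_0^1}^2\,ds\leq C$ and analogous bounds for the transport and noise terms in their natural spaces.

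\textbf{Step 3 (Tightness, Skorokhod, passage to the limit).} The $L^2_tH_0^1$ bound together with a fractional-in-time Sobolev estimate on the stochastic integral and the $L^2_tH^{-1}$ bound on the drift give, via an Aubin-Lions/Flandoli-Gatarek criterion, tightness of the laws of $\{X^n\}_n$ on $L^2(0,T;L^2(\mathcal{O}))\cap C([0,T];H^{-1}(\mathcal{O}))$, jointly with the driving noise. Prokhorov combined with the Jakubowski-Skorokhod theorem provides a new filtered probability space on which a subsequence converges almost surely, with strong convergence in $L^2(0,T;L^2(\mathcal{O}))$. The continuity of $\Psi$, $g\circ\widetilde\gamma^{-1}$ and $\eta\circ\widetilde\gamma^{-1}$, together with the uniform estimates, permits passage to the limit in every deterministic term of the variational identity of Definition \ref{DefSol}; the stochastic integral is identified by the standard martingale characterization, matching the limiting quadratic variation with the Hilbert-Schmidt norm of $B(\widetilde\gamma^{-1}(\widetilde X))$. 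The main obstacle, and what distinguishes the argument from the linear-noise case of \cite{eu-ito-div}, is precisely the \emph{nonlinear} nature of $B$: one cannot pull limits through $B$ by linearity, one must work in $L^2$ rather than $H^{-1}$ because $B$ is only bounded from $H_0^1$ to Hilbert-Schmidt operators, and the whole energy method hinges on the delicate algebraic identity of Step 2, which fails as soon as any of the three structural hypotheses on $Q$, $\widetilde\gamma^{-1}$ and $g$ is relaxed.
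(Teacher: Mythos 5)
Your proposal is correct and follows essentially the same route as the paper: a Galerkin approximation, an It\^o energy estimate in $L^{2}(\mathcal{O})$ whose closure rests on exactly the cancellation you isolate between the Stratonovich correction drift and the quadratic variation of the noise (using $Q\geq 0$ and $0\leq(\widetilde\gamma^{-1})'\leq 1$), Simon-type compactness plus Skorokhod, and passage to the limit in the variational identity. The only tactical differences are that the paper notes the martingale term in the energy identity vanishes identically (since $\mathrm{div}\,\sigma_k=0$, so $(\eta(\widetilde\gamma^{-1}(X^{(n)})),\sigma_k\cdot\nabla X^{(n)})_2=0$), which upgrades your BDG--Gronwall expectation bound to a pathwise one, and that it identifies the nonlinear limits through maximal monotonicity of $\Delta\Psi$ and $\Delta(g\circ\widetilde\gamma^{-1})$ in $H^{-1}(\mathcal{O})$ combined with strong convergence in $C([0,T];H^{-1}(\mathcal{O}))$, rather than through strong convergence in $L^{2}((0,T)\times\mathcal{O})$ as you propose.
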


\bigskip

\begin{proof}[Proof of Theorem \ref{ThMain}]
We begin with noting that the source $F$ can be considered regular enough, such that its presence does not impact the mathematical arguments. As a consequence, and without loss of generality, throughout the proof, we assume $F=0$.\\
We consider the Gelfand triple $V\subset H\subset V^{\ast }$ where $%
V=H_{0}^{1}\left( \mathcal{O}\right) ,$ $H=L^{2}\left( \mathcal{O}\right) $
and $V^{\ast }=H^{-1}\left( \mathcal{O}\right) .$

We let $H_{n}=span\left\{
e_{1},e_{2},...,e_{n}\right\} $, such that $span\left\{ e_{i}\left\vert i\in 
\mathbb{N}\right\vert \right\} $ is dense in $V$.

Let $P_{n}:V^{\ast }\rightarrow H_{n}$ be defined by%
\begin{equation*}
P_{n}y=\sum\limits_{i=1}^{n}\left( y,e_{i}\right) _{H}e_{i},\quad y\in
V^{\ast }.
\end{equation*}

Clearly, $\left. P_{n}\right\vert _{H}$ is just the orthogonal projection
onto $H_{n}$ in $H$. We take as initial data $X^{\left( n\right) }\left(
0\right) =P_{n}x.$ For (coherence and) notation purposes, we will also employ $x^{(n)}=P_{n}x$ to denote the initial datum for the approximating solutions.

For each finite $n\in \mathbb{N}$ we consider the following equation in $%
H_{n}$%
\begin{eqnarray*}
dX &=&P_{n}\Delta \Psi \left( P_{n}X\right) dt+P_{n}\textit{div}%
\left[ P_{n}\left(Q \nabla g\left( \widetilde{\gamma }%
^{-1}\left( P_{n}X\right) \right)\right) \right] dt \\
&&-P_{n}B\left( \widetilde{\gamma }^{-1}\left( P_{n}X\right) \right)
dW\left( t\right) 
\end{eqnarray*}%
which has a unique strong solution in finite dimension since the operators
are Lipschitz-continuous
\begin{eqnarray*}
dX^{\left( n\right) } &=&\Delta P_{n}\Psi \left( X^{\left( n\right) }\right)
dt+P_{n}\textit{div}\left[ P_{n}\left(Q\nabla g\left( 
\widetilde{\gamma }^{-1}\left( X^{\left( n\right) }\right) \right)\right) \right] dt
\\
&&-\sum\limits_{k=1}^{n }P_{n}\left( \alpha _{k}\sigma _{k}\cdot\nabla
\eta \left( \widetilde{\gamma }^{-1}\left( X^{\left( n\right) }\right)
\right) \right)d\beta _{k}\left( t\right) ,
\end{eqnarray*}%
where $P_{n}X=X^{\left( n\right) }$.
Local existence of the solution is a classical fact due to the regularity of the coefficients appearing in the equation, see for example \cite{K-S}, \cite{Prevot}, \cite{Skorokhod} (since the projected spaces are both pertaining to $L^2(\mathcal{O})$ and $H_0^1(\mathcal{O})$, we will not distinguish between them).

We first apply the It\^{o} formula in $L^{2}\left( \mathcal{O}\right) $ and
we get%
\begin{eqnarray*}
&&\left\Vert X^{\left( n\right) }\left( t\right) \right\Vert
_{2}^{2}+2\int_{0}^{t}\int_{\mathcal{O}}\Psi ^{\prime }\left( X^{\left(
n\right) }\right) \left\vert \nabla X^{\left( n\right) }\right\vert ^{2}d\xi
ds \\
&&+2\int_{0}^{t}\int_{\mathcal{O}}\left( \nabla \left( \left( g\circ 
\widetilde{\gamma }^{-1}\right) \left( X^{\left( n\right) }\right) \right)
\right)^t Q \nabla X^{\left( n\right) }d\xi ds \\
&=&\left\Vert X^{\left( n\right) }\left( 0\right) \right\Vert
_{2}^{2}+\sum\limits_{k=1}^{n}\int_{0}^{t}\int_{\mathcal{O}%
}\left\vert P_{n}\left( \alpha _{k}\sigma _{k}\cdot \nabla \left( \left(
\eta \circ \widetilde{\gamma }^{-1}\right) \left( X^{\left( n\right)
}\right) \right) \right) \right\vert ^{2}d\xi ds \\
&&+2\sum\limits_{k=1}^{n}\int_{0}^{t}\alpha _{k}\left( \left( \eta
\circ \widetilde{\gamma }^{-1}\right) \left( X^{\left( n\right) }\right)
,\sigma _{k}\cdot \nabla X^{\left( n\right) }\right) _{2}d\beta _{k}\left(
s\right) .
\end{eqnarray*}
Keeping in mind that ${div}\ \sigma _{k}=0$ (assertion already used in the previous equality!), we can see that 
\begin{eqnarray*}
\left( \left( \eta \circ \widetilde{\gamma }^{-1}\right) \left( X^{\left(
n\right) }\right) ,\sigma _{k}\cdot \nabla X^{\left( n\right) }\right) _{2}
&=&\left( \nabla \left( \Upsilon \left( X^{\left( n\right) }\right) \right)
\cdot \sigma _{k}\right) _{2}\medskip \medskip \\
&=&-\left( \Upsilon \left( X^{\left( n\right) }\right) ,{div}\ \sigma
_{k}\right) _{2}=0
\end{eqnarray*}%
where $\Upsilon $ is a primitive of $\eta \circ \widetilde{\gamma }^{-1}$. Indeed, by the choice of our basis $e_k$ as having Dirichlet homogeneous conditions, we also inherit these conditions on $\mu_ke_k$, and, as a consequence, we have $N_\xi\cdot(\mu_ke_k)=0$ on the boundary, which comes in handy to simplify Green's formula.

Keeping in mind the definition of $g$ we have that%
\begin{eqnarray*}
&&\int_{0}^{t}\int_{\mathcal{O}}\left( \nabla \left( \left( g\circ 
\widetilde{\gamma }^{-1}\right) \left( X^{\left( n\right) }\right) \right)
\right)^t Q \nabla X^{\left( n\right) }d\xi ds \\
&=&\frac{1}{2}\int_{0}^{t}\int_{\mathcal{O}}\left( \eta ^{\prime }\left( 
\widetilde{\gamma }^{-1}\left( X^{\left( n\right) }\right) \right) \right)
^{2}\left( \widetilde{\gamma }^{-1}\right) ^{\prime }\left( X^{\left(
n\right) }\right) \left(\nabla X^{\left( n\right) }\right)^tQ\ \nabla X^{\left( n\right) }d\xi ds \\
&=&\frac{1}{2}\int_{0}^{t}\int_{\mathcal{O}}\left(\left( \eta \circ \widetilde{%
\gamma }^{-1}\right) ^{\prime }\left( X^{\left( n\right) }\right) 
\nabla X^{\left( n\right) }\right)^t Q \eta ^{\prime
}\left( \widetilde{\gamma }^{-1}\left( X^{\left( n\right) }\right) \right) 
\nabla X^{\left( n\right) }d\xi ds
\end{eqnarray*}

Since we have assumed that $0\leq \left( \widetilde{\gamma }^{-1}\right) ^{\prime
}\leq 1$, we get that 
\begin{eqnarray*}
&&\sum\limits_{k=1}^{n}\int_{0}^{t}\int_{\mathcal{O}}\left\vert
P_{n}\left( \alpha _{k}\sigma _{k}\cdot \nabla \eta \left( \widetilde{\gamma 
}^{-1}\left( X^{\left( n\right) }\right) \right) \right) \right\vert
^{2}d\xi ds \\
&\leq &\sum\limits_{k=1}^{n}\int_{0}^{t}\int_{\mathcal{O}}\left\vert
\left( \alpha _{k}\sigma _{k}\cdot \nabla \eta \left( \widetilde{\gamma }%
^{-1}\left( X^{\left( n\right) }\right) \right) \right) \right\vert ^{2}d\xi
ds \\
&\leq &\int_{0}^{t}\int_{\mathcal{O}}\left( \left( \eta \circ \widetilde{%
\gamma }^{-1}\right) ^{\prime }\left( X^{\left( n\right) }\right) \right)
^{2}\left( \nabla X^{\left( n\right) }\right) ^{t}Q \nabla
X^{\left( n\right) }d\xi ds \\
&\leq &\int_{0}^{t}\int_{\mathcal{O}}\left( \eta \circ \widetilde{\gamma }^{-1}\right) ^{\prime }\left( X^{\left( n\right) }\right) \left( \nabla X^{\left( n\right) }\right) ^{t}Q\left( \eta \circ  \widetilde{\gamma }^{-1}\right) ^{\prime }\left( X^{\left( n\right) }\right) \nabla X^{\left( n\right) }d\xi ds \\
&\leq &2\int_{0}^{t}\int_{\mathcal{O}}\left( \nabla \left( g\circ \widetilde{%
\gamma }^{-1}\right) \left( X^{\left( n\right) }\right) \right) ^{t}Q\left(
\xi \right) \nabla X^{\left( n\right) }d\xi ds,
\end{eqnarray*}%
which leads to%
\begin{equation}
\underset{t\in \left[ 0,T\right] }{\sup }\left\Vert X^{\left( n\right)
}\left( t\right) \right\Vert _{2}^{2}+2\int_{0}^{t}\int_{\mathcal{O}}\Psi
^{\prime }\left( X^{\left( n\right) }\right) \left\vert \nabla X^{\left(
n\right) }\right\vert ^{2}d\xi ds\leq \left\Vert X^{\left( n\right) }\left(
0\right) \right\Vert _{2}^{2},  \label{Ito1}
\end{equation}%
uniformly for all $\omega \in \Omega $. The reader is invited to note that, since $X^{(n)}(0)$ is a mere projection of the initial datum, the right-hand member in \eqref{Ito1} can be bounded uniformly w.r.t. $n\geq 1$.

We deduce that, for every $p>1$, and along some subsequence, 
\begin{eqnarray}
X^{\left( n\right) } &\rightharpoonup &X\text{ weakly in }L^p\left(
0,T;L^{2}\left( \mathcal{O}\right) \right) ,\quad \mathbb{P}\text{-a.s.}
\label{starweak} \\
X^{\left( n\right) } &\rightharpoonup &X\text{ weakly in }L^{2}\left( \left(
0,T\right) ;H_{0}^{1}\left( \mathcal{O}\right) \right) ,\quad \mathbb{P}%
\text{-a.s.}.  \notag
\end{eqnarray}%
Our keen readers might ask why the convergence is not merely weak in spaces like $L^2(\Omega;L^{2}\left( \left(0,T\right) ;H_{0}^{1}\left( \mathcal{O}\right) \right))$ with the probability space still present, instead of $\mathbb{P}-a.s.$. The passage from one to another can be found, for instance, in \cite[Theorem 3.1]{Yannelis}. Furthermore, the fist convergence is actually in $L^2(\Omega;L^\infty\left(
0,T;L^{2}\left( \mathcal{O}\right) \right))$, but the $\mathbb{P}$-a.s. version requires a reflexive space, hence the presence of $L^p$, with an arbitrarily large $p>1$. The $L^\infty$ bounds on $X^{(n)}$ will also be inherited on the limit.\\
By recalling that the functions $\Psi ,~g,~\eta $ and $\widetilde{\gamma }^{-1}$ are Lipschitz-continuous (hence present at most linear growth), we also
have that%
\begin{eqnarray*}
\Psi \left( X^{\left( n\right) }\right) &\rightharpoonup &\varkappa \text{
weakly in }L^{2}\left( \left( 0,T\right) ;L^{2}\left( \mathcal{O}\right)
\right) ,\quad \mathbb{P}\text{-a.s.} \\
g\left( \widetilde{\gamma }^{-1}\left( X^{\left( n\right) }\right) \right)
&\rightharpoonup &\rho \text{ weakly in }L^{2}\left( \left( 0,T\right)
;L^{2}\left( \mathcal{O}\right) \right) ,\quad \mathbb{P}\text{-a.s.} \\
\eta \left( \widetilde{\gamma }^{-1}\left( X^{\left( n\right) }\right)
\right) &\rightharpoonup &\zeta \text{ weakly in }L^{2}\left( \left(
0,T\right) ;L^{2}\left( \mathcal{O}\right) \right) ,\quad \mathbb{P}\text{%
-a.s.}
\end{eqnarray*}

In order to identify the limits of the nonlinear terms, we need to show the
strong convergence of the approximating solutions in $H^{-1}\left( \mathcal{O%
}\right) .$\ To this purpose we introduce the inclusions%
\begin{equation*}
L^{2}\left( \mathcal{O}\right) \subset H^{-1}\left( \mathcal{O}\right)
\subset H^{-\beta }\left( \mathcal{O}\right) ,
\end{equation*}%
where $\beta $ is assumed to be large enough. Note that $L^{2}\left( 
\mathcal{O}\right) \hookrightarrow H^{-1}\left( \mathcal{O}\right) $ is a
compact embedding. We intend to use the compactness result in Corollary 5
from \cite{simon}. According to this result, one has 
\begin{equation*}
L^{\infty }\left( 0,T;L^{2}\left( \mathcal{O}\right) \right) \cap W^{\alpha
,r}\left( 0,T;H^{-\beta }\left( \mathcal{O}\right) \right) \subset C\left( %
\left[ 0,T\right] ;H^{-1}\left( \mathcal{O}\right) \right),
\end{equation*}%
with compact inclusion, provided that $\alpha <\frac{1}{2},~\beta >4$, $r\geq 4$  and $%
\alpha r>1$.

In order to get the strong convergence of the sequence of approximating
solutions in $H^{-1}\left( \mathcal{O}\right) $, we consider the sequence of
probability measures $\left\{ \nu _{n}\right\} _{n}$, where $\nu _{n}$ is
the law of $X^{\left( n\right) }$, and we prove that $\left\{ \nu
_{n}\right\} _{n}$ is tight on the space $C\left( \left[ 0,T\right]
;H^{-1}\left( \mathcal{O}\right) \right) $. This means that for each $%
\varepsilon >0$, we should exhibit a compact subset $K_{R}$ of $C\left( [0,T];H^{-1}\left( \mathcal{O}\right) \right) $ such that $\nu _{n}\left(
K_{R}^{c}\right) \leq \varepsilon $ for all $n\in \mathbb{N}$, and we are searching for these compacts under a particular form suggested by the aforementioned Corollary.

We define for each $R$ the set%
\begin{equation*}
K_{R}=\left\{ f\in C\left( 0,T;H^{-1}\left( \mathcal{O}\right) \right)
;~\left\Vert f\right\Vert _{L^{\infty }\left( 0,T;L^{2}\left( \mathcal{O}%
\right) \right) }+\left\Vert f\right\Vert _{W^{\alpha ,r}\left(
0,T;H^{-\beta }\left( \mathcal{O}\right) \right) }\leq R\right\} .
\end{equation*}

For our readers' convenience, we ecall that the fractional Sobolev space $%
W^{\alpha ,r}\left( 0,T;H^{-\beta }\left( \mathcal{O}\right) \right) $ is
defined as a set of functions $f\in L^{r}\left( 0,T;H^{-\beta }\left( 
\mathcal{O}\right) \right) $ such that 
\begin{equation*}
\int_{0}^{T}\int_{0}^{T}\frac{\left\Vert f\left( t\right) -f\left( s\right)
\right\Vert _{H^{-\beta }\left( \mathcal{O}\right) }^{r}}{\left\vert
t-s\right\vert ^{1+\alpha r}}dtds<\infty .
\end{equation*}

In order to show that the sequence of laws $\left\{ \nu _{n}\right\} _{n}$ is tight w.r.t. the Polish space $%
C\left( \left[ 0,T\right] ;H^{-1}\left( \mathcal{O}\right) \right) $, we will
first provide the following upper bound for the expected value
\begin{equation}
\mathbb{E}\underset{t\in \left[ 0,T\right] }{\sup }\left\vert X^{\left(
n\right) }\left( t\right) \right\vert _{2}^{2}ds+\mathbb{E}%
\int_{0}^{T}\int_{0}^{T}\frac{\left\Vert X^{\left( n\right) }\left( t\right)
-X^{\left( n\right) }\left( s\right) \right\Vert _{H^{-\beta }}^{r}}{%
\left\vert t-s\right\vert ^{1+\alpha r}}dtds\leq C.  \label{estim-tight}
\end{equation}

Since the first term from (\ref{estim-tight}) is bounded from the estimates (%
\ref{Ito1}) above (and this without requiring expectation), it is sufficient to provide and estimate for the second term in the left-hand member.

To this purpose, we rely on the following lemmas.
\end{proof}
\begin{lemma}\label{Lemma1}
There exists a constant $C$ independent of $n$ such that for $r\geq 4$ and
all $0\leq s\leq t\leq T$ it holds%
\begin{equation*}
\mathbb{E}\left[ \left\vert \left( X_{t}^{\left( n\right) }-X_{s}^{\left(
n\right) },e_{j}\right) _{2}\right\vert ^{r}\right] \leq C\left\vert t-s\right\vert ^{\frac{r}{2}}\left( \lambda
_{j}^{r}+\left\Vert {div}\left[ Q\left( \xi \right) \nabla e_{j}%
\right] \right\Vert _{2}^{r}+\lambda _{j}^{\frac{r}{2}}\right) .
\end{equation*}
\end{lemma}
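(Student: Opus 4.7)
The plan is to start from the Galerkin equation, tested against $e_j$, and control each of the resulting three terms in $L^r(\Omega)$ separately. Since $e_j\in H_n$ for $n\geq j$, the projector $P_n$ disappears when paired with $e_j$, and writing $M_t^{(n,j)}:=(X^{(n)}_t,e_j)_2$ we get, for $n\geq j$,
\begin{equation*}
M_t^{(n,j)}-M_s^{(n,j)}=\underbrace{\int_s^t\!(\Psi(X^{(n)}),\Delta e_j)_2\,du}_{\text{(I)}}+\underbrace{\int_s^t\!(g(\widetilde\gamma^{-1}(X^{(n)})),\mathrm{div}[Q\nabla e_j])_2\,du}_{\text{(II)}}-\underbrace{\sum_{k=1}^n\int_s^t\!\alpha_k(\eta(\widetilde\gamma^{-1}(X^{(n)})),\sigma_k\cdot\nabla e_j)_2\,d\beta_k(u)}_{\text{(III)}},
\end{equation*}
after two applications of Green's formula (using the Dirichlet condition on $e_j$ for (I), and $\mathrm{div}(\sigma_k)=0$ together with $N_\xi\cdot\sigma_k=0$ — the latter from the choice $\sigma_k=\mu_k e_k$ in Example \ref{example1} — for (III)).

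For (I) I would use $-\Delta e_j=\lambda_j e_j$, Cauchy--Schwarz, the Lipschitz property of $\Psi$ with $\Psi(0)=0$, and the uniform-in-$\omega$ bound \eqref{Ito1} on $\|X^{(n)}\|_2$, obtaining $|\text{(I)}|\leq C\lambda_j|t-s|$. Raising to the $r$-th power and absorbing the extra $|t-s|^{r/2}$ factor into the constant (using $|t-s|\leq T$) yields the contribution $C\lambda_j^r|t-s|^{r/2}$. Term (II) is handled identically, with $\mathrm{div}[Q\nabla e_j]$ taking the role of $\lambda_j e_j$ and $g\circ\widetilde\gamma^{-1}$ being Lipschitz with value $0$ at $0$; this produces the $\|\mathrm{div}[Q\nabla e_j]\|_2^r$ contribution.

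The core estimate is (III), handled by Burkholder--Davis--Gundy:
\begin{equation*}
\mathbb{E}|\text{(III)}|^r\leq C_r\,\mathbb{E}\Bigl(\sum_{k=1}^n\alpha_k^2\int_s^t(\eta(\widetilde\gamma^{-1}(X^{(n)})),\sigma_k\cdot\nabla e_j)_2^2\,du\Bigr)^{r/2}.
\end{equation*}
By Cauchy--Schwarz in the spatial variable and the Lipschitz/vanishing property of $\eta\circ\widetilde\gamma^{-1}$ together with \eqref{Ito1}, the inner product squared is bounded by $C\|\sigma_k\cdot\nabla e_j\|_2^2$. Since $\sigma_k=\mu_ke_k$, $\|e_k\|_\infty\leq C\lambda_k$ and $\|\nabla e_j\|_2^2=\lambda_j$, one has $\|\sigma_k\cdot\nabla e_j\|_2^2\leq C^2\lambda_k^2|\mu_k|_\infty^2\lambda_j$, and summing against $\alpha_k^2$ using hypothesis \eqref{Ip1} gives $\sum_k\alpha_k^2\|\sigma_k\cdot\nabla e_j\|_2^2\leq CC_1\lambda_j$. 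Plugging back in, the BDG bound becomes $C\lambda_j^{r/2}|t-s|^{r/2}$.

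The main obstacle, modulo routine Lipschitz and Hölder estimates, is organizing the BDG/Cauchy--Schwarz sum for (III) so that the eigenvalue $\lambda_j$ comes out with exponent exactly $r/2$ (and not $r$), which relies crucially on transferring the gradient onto $e_j$ via integration by parts before applying the bound, and on the summability \eqref{Ip1} of the noise coefficients. Combining the three contributions via the elementary inequality $(a+b+c)^r\leq 3^{r-1}(a^r+b^r+c^r)$ yields the claimed bound.
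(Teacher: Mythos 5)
Your proposal is correct and follows essentially the same route as the paper: the same three-term decomposition of $(X_t^{(n)}-X_s^{(n)},e_j)_2$, Cauchy--Schwarz plus the Lipschitz bounds and the uniform estimate \eqref{Ito1} for the two drift terms, and Burkholder--Davis--Gundy combined with \eqref{Ip1} for the stochastic term. If anything, your bookkeeping of the quadratic variation as $\sum_k\alpha_k^2\|\sigma_k\cdot\nabla e_j\|_2^2\leq CC_1\lambda_j$ is slightly cleaner than the paper's, which places the sum over $k$ inside a single inner product before estimating; both yield the same $\lambda_j^{r/2}|t-s|^{r/2}$ contribution.
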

The result is proven by estimating each of the terms appearing in the definition of the solution. For our readers' comfort, we relegate the explicit proof to a specially dedicated Subsection \ref{SubsLem} and continue the proof of our main result.

\begin{lemma}\label{Lemma2}
For $\beta >4$ and  $r\geq 4$ , there is a
constant $C$ independent of $n$ such that%
\begin{equation*}
\mathbb{E}\left[ \left\Vert X_{t}^{\left( n\right) }-X_{s}^{\left( n\right)
}\right\Vert _{H^{-\beta }}^{r}\right] \leq C\left\vert t-s\right\vert ^{%
\frac{r}{2}}.
\end{equation*}
\end{lemma}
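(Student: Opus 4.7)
The approach will hinge on the spectral characterization of the negative Sobolev norm afforded by the Dirichlet eigenbasis $\{e_j\}_{j\geq 1}$. Since $-\Delta e_j=\lambda_j e_j$, one has
$$\left\|f\right\|_{H^{-\beta}(\mathcal{O})}^{2}=\sum_{j\geq 1}\lambda_{j}^{-\beta}\,(f,e_j)_{2}^{2},$$
and this representation is obviously valid for the finite-dimensional approximants $X^{(n)}_t-X^{(n)}_s$. My first step is then to invoke Minkowski's integral inequality, which is licit since $r/2\geq 2\geq 1$, in order to exchange the expectation with the $\ell^{2}$-sum:
$$\left(\mathbb{E}\,\|X_{t}^{(n)}-X_{s}^{(n)}\|_{H^{-\beta}}^{r}\right)^{2/r}\leq \sum_{j\geq 1}\lambda_{j}^{-\beta}\left(\mathbb{E}\,\bigl|(X_{t}^{(n)}-X_{s}^{(n)},e_j)_{2}\bigr|^{r}\right)^{2/r}.$$

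The second step consists in invoking Lemma \ref{Lemma1} entry-by-entry, so that the right-hand side is dominated by
$$C\,|t-s|\sum_{j\geq 1}\lambda_{j}^{-\beta}\bigl(\lambda_{j}^{2}+\|\textit{div}[Q\nabla e_j]\|_{2}^{2}+\lambda_{j}\bigr).$$
The key analytic ingredient is then the control of the middle term. Under assumption \eqref{Ip3}, both $q_{ij}$ and $\partial_i q_{ij}$ are uniformly bounded on $\mathcal{O}$. Expanding $\textit{div}[Q\nabla e_j]$ by Leibniz produces a sum of terms involving $\nabla e_j$ and second-order derivatives of $e_j$. Standard elliptic regularity on the smooth domain $\mathcal{O}$ yields $\|e_j\|_{H^{2}}\leq C(\|\Delta e_j\|_{2}+\|e_j\|_{2})\leq C\lambda_{j}$, whereas $\|\nabla e_j\|_{2}=\sqrt{\lambda_j}$. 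Consequently, $\|\textit{div}[Q\nabla e_j]\|_{2}\leq C\lambda_{j}$, and the series collapses (up to a multiplicative constant) to $\sum_{j\geq 1}\lambda_{j}^{2-\beta}$.

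Convergence of this spectral series for $\beta>4$ is guaranteed by the Weyl asymptotics $\lambda_{j}\sim c\,j^{2/d}$: finiteness holds as soon as $\beta>2+d/2$, which is amply satisfied for $\beta>4$ in every admissible dimension $d\in\{1,2,3\}$. Raising both sides of the resulting inequality to the power $r/2$ produces the announced estimate. The principal technical obstacle is therefore not probabilistic -- the time-regularity content is already compressed into Lemma \ref{Lemma1} -- but rather the deterministic spectral bookkeeping: one must make sure that the polynomial growth in $\lambda_j$ introduced by the second-order operator $\textit{div}[Q\nabla\,\cdot\,]$ remains compatible with the convergence margin afforded by the hypothesis $\beta>4$, which is exactly what elliptic regularity combined with \eqref{Ip3} delivers.
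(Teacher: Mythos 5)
Your proof is correct and follows essentially the same strategy as the paper: expand the $H^{-\beta}$ norm spectrally, interchange the expectation with the sum over $j$, feed in Lemma \ref{Lemma1} termwise, control $\left\Vert \textit{div}\left[Q\nabla e_j\right]\right\Vert_2$ via \eqref{Ip3} together with elliptic regularity ($\left\Vert e_j\right\Vert_{H^2}\leq C\lambda_j$, $\left\Vert \nabla e_j\right\Vert_2=\lambda_j^{1/2}$), and sum the resulting spectral series. The one genuine difference is the interchange step: the paper splits $\lambda_j^{-\beta}$ and applies H\"older's inequality with exponents $\frac{r}{2},\frac{r}{r-2}$, introducing an auxiliary $\varepsilon\in(0,1)$ and requiring separately that $\sum_j\lambda_j^{-(1+\varepsilon)}<\infty$, whereas you use the triangle inequality in $L^{r/2}(\Omega)$ (Minkowski), which is legitimate since $r/2\geq 1$ and the summands are nonnegative. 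Your route is arguably cleaner --- it dispenses with the $\varepsilon$ bookkeeping and reduces the convergence question to the single series $\sum_j\lambda_j^{2-\beta}$, which you correctly justify via Weyl asymptotics for $d\leq 3$ and $\beta>4$ (indeed $\beta>2+d/2$ suffices); the paper's version leaves the Weyl input implicit. Both arguments deliver the same constant structure and the same final bound.
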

The results employs Hölder's inequality and the initial assumptions on $Q$ and, as it has already been the case for the previous result, its complete proof is relegated to Subsection \ref{SubsLem}.

\bigskip

\begin{proof}[Proof of Theorem 3 (continued)]
We can now use the previous Lemmas in order to get 
\begin{equation*}
\int_{0}^{T}\int_{0}^{T}\frac{\mathbb{E}\left\Vert X^{\left( n\right)
}\left( t\right) -X^{\left( n\right) }\left( s\right) \right\Vert
_{H^{-\beta }}^{r}}{\left\vert t-s\right\vert ^{1+\alpha r}}dtds\leq
\int_{0}^{T}\int_{0}^{T}\frac{C\left\vert t-s\right\vert ^{\frac{r}{2}}}{%
\left\vert t-s\right\vert ^{1+\alpha r}}dtds\leq C,
\end{equation*}%
since $1+\alpha r-\frac{r}{2}<1$ for $\alpha \in \left( 0,\frac{1}{2}\right) 
$ and $r\geq 4$.

As we have already explained, at this point we use Corollary 5 from \cite{simon}, to first get that the set%
\begin{equation*}
K_{R}=\left\{ f\in C\left( 0,T;H^{-1}\left( \mathcal{O}\right) \right)
;~\left\Vert f\right\Vert _{L^{\infty }\left( 0,T;L^{2}\left( \mathcal{O}%
\right) \right) }+\left\Vert f\right\Vert _{W^{\alpha ,r}\left(
0,T;H^{-\beta }\left( \mathcal{O}\right) \right) }\leq R\right\}
\end{equation*}%
is a compact set in the space $C\left( \left[ 0,T\right] ;H^{-1}\left( \mathcal{O}%
\right) \right) $.

By using Markov's inequality, we have%
\begin{eqnarray*}
\nu _{n}\left( K_{R}^{c}\right) &=&\mathbb{P}\left( X^{\left( n\right) }\in
K_{R}^{c}\right) \\
&=&\mathbb{P}\left( \left\Vert X^{\left( n\right) }\right\Vert _{L^{\infty
}\left( 0,T;L^{2}\left( \mathcal{O}\right) \right) }+\left\Vert X^{\left(
n\right) }\right\Vert _{W^{\alpha ,r}\left( 0,T;H^{-\beta }\left( \mathcal{O}%
\right) \right) }>R\right) \\
&\leq &\frac{1}{R}\mathbb{E}\left[\left\Vert X^{\left( n\right) }\right\Vert _{L^{\infty
}\left( 0,T;L^{2}\left( \mathcal{O}\right) \right) }+\left\Vert X^{\left(
n\right) }\right\Vert _{W^{\alpha ,r}\left( 0,T;H^{-\beta }\left( \mathcal{O}%
\right) \right) }\right]\leq \frac{C}{R}\leq\varepsilon,
\end{eqnarray*}%
for $R$ sufficiently large and we obtain that the family of laws $\left\{
\nu _{n}\right\} _{n}$ is tight in the space $C\left( \left[ 0,T\right]
;H^{-1}\left( \mathcal{O}\right) \right) $.

As a consequence, by Skorohod's theorem, there exists a probability space $\left( 
\widetilde{\Omega },\widetilde{\mathcal{F}},\widetilde{\mathbb{P}}\right) $
endowed with a filtration $\left( \widetilde{\mathcal{F}}_{t}\right) _{t\in \left[
0,T\right] }$, a sequence of filtrations $\left( \widetilde{\mathcal{F}}%
_{t}^{\left( n\right) }\right) _{t\in \left[ 0,T\right] }$, and the
stochastic processes $\widetilde{X}^{\left( n\right) }$ with the $\left( 
\widetilde{\mathcal{F}}_{t}^{\left( n\right) }\right) _{t\in \left[ 0,T%
\right] }$ cylindrical Wiener process $\widetilde{W}^{\left( n\right)
}=\sum\limits_{k=1}^{\infty }e_{k}\widetilde{\beta }_{t}^{k,n}$ and also $%
\widetilde{X}$ with the $\left( \widetilde{\mathcal{F}}_{t}\right) _{t\in %
\left[ 0,T\right] }$ cylindrical Wiener process $\widetilde{W}%
=\sum\limits_{k=1}^{\infty }e_{k}\widetilde{\beta }_{t}^{k}$ on $\left( 
\widetilde{\Omega },\widetilde{\mathcal{F}},\widetilde{\mathbb{P}}\right) $.
Furthermore, the law of $\widetilde{X}^{\left( n\right) }$ is the same as
the law of $X^{\left( n\right) },$ the law of $\widetilde{W}^{\left(
n\right) }$is the same as the law of $W^{\left( n\right) }$and 
\begin{eqnarray}
\widetilde{X}^{\left( n\right) } &\longrightarrow &\widetilde{X}\text{
strongly in }C\left( \left[ 0,T\right] ;H^{-1}\left( \mathcal{O}\right)
\right) ,~\widetilde{\mathbb{P}}- a.s.  \label{star} \\
\widetilde{\beta }_{t}^{k,n} &\longrightarrow &\widetilde{\beta }_{t}^{k}%
\text{ strongly in }C\left( \left[ 0,T\right] ;\mathbb{R}\right) ,~\widetilde{\mathbb{P}}-%
a.s.,~\forall k\geq 0,  \notag
\end{eqnarray}%
as $n\rightarrow \infty $. 

In order to conclude that $\left( \widetilde{X},\widetilde{W}\right) $ is a
weak solution to our equation of interest, we still need to check an adequate
integrability property as follows.
\end{proof}

\begin{lemma}\label{Lemma3}
The process $\widetilde{X}$ has $\widetilde{\mathbb{P}}$-a.s. weakly
continuous trajectories w.r.t. $L^{2}\left( \mathcal{O}\right) $, and it further satisfies%
\begin{equation*}
\underset{t\in \left[ 0,T\right] }{\sup }\left\Vert \widetilde{X}\left(
t\right) \right\Vert _{2}^{2}\leq \left\Vert X_{0}\right\Vert _{2}^{2},\quad 
\widetilde{\mathbb{P}}-a.s.
\end{equation*}
\end{lemma}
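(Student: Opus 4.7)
The plan is to propagate the pathwise uniform energy bound \eqref{Ito1}, already established for the Galerkin approximations $X^{(n)}$, to their Skorokhod copies $\widetilde{X}^{(n)}$ (which share the same law), and then pass to the limit via the weak lower semicontinuity of the $L^{2}$ norm. Since $P_{n}$ is an orthogonal projection in $L^{2}(\mathcal{O})$, it is a contraction and $\|P_{n}x\|_{2}\leq \|X_{0}\|_{2}$; combining this with the equality of laws yields
\[
\sup_{t\in [0,T]}\|\widetilde{X}^{(n)}(t)\|_{2}^{2}\leq \|X_{0}\|_{2}^{2},\quad \widetilde{\mathbb{P}}\text{-a.s., for every }n\geq 1.
\]

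Next, I fix $\widetilde{\omega}$ outside a $\widetilde{\mathbb{P}}$-null set on which both the above bound and the strong convergence \eqref{star} in $C([0,T];H^{-1}(\mathcal{O}))$ hold. For each $t\in [0,T]$, the sequence $\bigl(\widetilde{X}^{(n)}(t,\widetilde{\omega})\bigr)_{n}$ is bounded in $L^{2}(\mathcal{O})$; by Banach--Alaoglu, every subsequence admits a further weakly convergent subsequence in $L^{2}$. Since weak $L^{2}$ convergence implies strong $H^{-1}$ convergence on bounded sets, any such weak limit must coincide with $\widetilde{X}(t,\widetilde{\omega})$, so by the standard double-subsequence argument the whole sequence $\widetilde{X}^{(n)}(t,\widetilde{\omega})$ converges weakly in $L^{2}(\mathcal{O})$ to $\widetilde{X}(t,\widetilde{\omega})$. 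The weak lower semicontinuity of the norm then produces
\[
\|\widetilde{X}(t,\widetilde{\omega})\|_{2}\leq \liminf_{n}\|\widetilde{X}^{(n)}(t,\widetilde{\omega})\|_{2}\leq \|X_{0}\|_{2},
\]
and taking the supremum over $t$ delivers the desired energy estimate.

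For the weak continuity of trajectories, I combine the $C([0,T];H^{-1})$ regularity of $\widetilde{X}$ with the $L^{\infty}(0,T;L^{2})$ bound just obtained. For any test function $\phi \in H_{0}^{1}(\mathcal{O})$, the scalar product $(\widetilde{X}(t),\phi)_{2}$ coincides with the duality bracket $\langle \widetilde{X}(t),\phi \rangle_{H^{-1},H_{0}^{1}}$ and is therefore continuous in $t$. The extension to arbitrary $\phi \in L^{2}(\mathcal{O})$ follows from the density of $H_{0}^{1}$ in $L^{2}$ via a routine $\varepsilon/3$ argument exploiting the uniform $L^{2}$ bound on $\widetilde{X}(\cdot,\widetilde{\omega})$. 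The delicate point is the identification, pointwise in $(t,\widetilde{\omega})$, of the weak $L^{2}$ limit with $\widetilde{X}(t,\widetilde{\omega})$; once this compatibility between the weak-$L^{2}$ and strong-$H^{-1}$ topologies is secured on a single $\widetilde{\mathbb{P}}$-full set, the remainder of the proof is essentially bookkeeping.
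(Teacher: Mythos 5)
Your proof is correct, and at the level of ideas it follows the same strategy as the paper: transfer the pathwise bound \eqref{Ito1} to the Skorokhod copies via equality of laws, identify a weak $L^{2}$ limit through the strong $H^{-1}$ convergence \eqref{star}, and conclude by weak lower semicontinuity of the norm, with a density argument for the weak continuity of trajectories. The one place where you genuinely diverge is the middle step. The paper extracts a weak* convergent subsequence in $L^{\infty}\left(0,T;L^{2}\left(\mathcal{O}\right)\right)$, which only yields the bound $\left\Vert \widetilde{X}\left(s,\omega\right)\right\Vert_{2}^{2}\leq\left\Vert X_{0}\right\Vert_{2}^{2}$ for $s$ in a full-Lebesgue-measure set $S_{\omega}\subset\left[0,T\right]$, and must then extend to every $t$ by approximating with times $t_{n}\in S_{\omega}$ and invoking lower semicontinuity once more. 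You instead work pointwise in $t$: for each fixed $t$ you apply Banach--Alaoglu to the bounded sequence $\widetilde{X}^{(n)}\left(t,\omega\right)$ in $L^{2}\left(\mathcal{O}\right)$ and use the compactness of the embedding $L^{2}\left(\mathcal{O}\right)\hookrightarrow H^{-1}\left(\mathcal{O}\right)$ (which turns weak $L^{2}$ convergence into strong $H^{-1}$ convergence) to identify the limit with $\widetilde{X}\left(t,\omega\right)$ for \emph{every} $t$, not just almost every $t$. This buys you a one-step argument that dispenses with the exceptional set $S_{\omega}$ entirely; the price is that you must quote the compact-embedding fact explicitly (the paper records it earlier in the tightness discussion, so it is available). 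Your treatment of the weak continuity --- pairing with $\phi\in H_{0}^{1}\left(\mathcal{O}\right)$ via the $C\left(\left[0,T\right];H^{-1}\left(\mathcal{O}\right)\right)$ regularity and extending to $\phi\in L^{2}\left(\mathcal{O}\right)$ by density and the uniform bound --- is exactly the standard argument the paper delegates to the reference \cite{Flandoli2}.
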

The proof of this Lemma follows a similar argument with Lemma 3.5 from \cite%
{Flandoli2} and for this reason we shall only hint to the main ideas in Subsection \ref{SubsLem}. The interested reader is invited to consult the extensive initial proof.

\medskip

\begin{proof}[Proof of Theorem 3 (continued)]
We consider the limit in the (PDE) weak formulation of approximating solutions, i.e.,%
\begin{eqnarray*}
\left( \widetilde{X}^{\left( n\right) }\left( t\right) ,e_{j}\right) _{2}
&=&\left( x^{\left( n\right) },e_{j}\right) _{2}+\int_{0}^{t}\int_{\mathcal{O%
}}\Psi \left( \widetilde{X}^{\left( n\right) }\left( s\right) \right) \Delta
e_{j}d\xi ds \\
&&+\int_{0}^{t}\int_{\mathcal{O}}g\left( \widetilde{\gamma }%
^{-1}\left( \widetilde{X}^{\left( n\right) }\right) \right) {div}\left[
Q \nabla e_{j}\right] d\xi ds \\
&&+\sum_{k=1}^{n }\int_{0}^{t}\alpha _{k}\left( \eta \left( \widetilde{%
\gamma }^{-1}\left( \widetilde{X}^{\left( n\right) }\right) \right) ,\sigma
_{k}\cdot \nabla e_{j}\right) _{2}d\widetilde{\beta }_{s}^{k,n},\text{ }%
\widetilde{\mathbb{P}}-a.s.
\end{eqnarray*}

Since $e_{j}\in H_{0}^{1}\left( \mathcal{O}\right) \cap H^{2}\left( \mathcal{%
O}\right) $, we can write 
\begin{equation*}
\left( \widetilde{X}^{\left( n\right) }\left( t\right) ,e_{j}\right)
_{2}=~_{H^{-1}\left( \mathcal{O}\right) }\left( \widetilde{X}^{\left(
n\right) }\left( t\right) ,e_{j}\right) _{H_{0}^{1}\left( \mathcal{O}\right)
}
\end{equation*}%
and the strong convergence in $H^{-1}\left( \mathcal{O}\right) $ implies that%
\begin{equation*}
_{H^{-1}\left( \mathcal{O}\right) }\left( \widetilde{X}^{\left( n\right)
}\left( \cdot\right) ,e_{j}\right) _{H_{0}^{1}\left( \mathcal{O}\right)
}\rightarrow _{H^{-1}\left( \mathcal{O}\right) }\left( \widetilde{X}\left(
\cdot\right) ,e_{j}\right) _{H_{0}^{1}\left( \mathcal{O}\right) }\text{ in }%
C\left( \left[ 0,T\right] ;\mathbb{R}\right) .
\end{equation*}

The reader is invited to keep in mind that 
\begin{equation*}
\left\vert \left( \widetilde{X}^{\left( n\right) }\left( t\right)
,e_{j}\right) _{2}\right\vert \leq \left\Vert \widetilde{X}^{\left( n\right)
}\left( t\right) \right\Vert _{2}\left\Vert e_{j}\right\Vert _{2}\leq
C\left\Vert x^{\left( n\right) }\right\Vert _{2},\textnormal{ for all }t\in \left[ 0,T\right], \ \widetilde{\mathbb{P}}%
\text{-a.s.}  \text{,}
\end{equation*}%
(as usual, with a generic constant $C>0$ that depends neither on $j$, nor on the time parameter $t\in[0,T]$, nor on the approximating parameter $n\geq 1$), such that
\begin{equation*}
\underset{n\rightarrow \infty }{\lim }\left( \widetilde{X}^{\left( n\right)
}\left( t\right) ,e_{j}\right) _{2}=\left( \widetilde{X}\left( t\right)
,e_{j}\right) _{2}\text{,}\textnormal{ for all }t\in \left[ 0,T\right], \ \widetilde{\mathbb{P}}%
\text{-a.s.}  \text{,}
\end{equation*}
and, by a dominated convergence argument, we actually have the aforementioned convergence in $L^p([0,T];\mathbb{R}),\widetilde{\mathbb{P}}-a.s.$ (for every $p\geq 2$).
By the convergences (\ref{star}) we can pass to the limit in the second and
the third terms from the right hand side, without identifying the limit for
the moment%
\begin{eqnarray*}
\int_{0}^{t}\int_{\mathcal{O}}\Psi \left( \widetilde{X}^{\left( n\right)
}\left( s\right) \right) \Delta e_{j}d\xi ds &\longrightarrow
&\int_{0}^{t}\int_{\mathcal{O}}\varkappa \Delta e_{j}d\xi ds\text{,} \\
\int_{0}^{t}\int_{\mathcal{O}}g\left( \widetilde{\gamma }^{-1}\left( 
\widetilde{X}^{\left( n\right) }\right) \right) {div}\left[ Q\left( \xi
\right) \nabla e_{j}\right] d\xi ds &\longrightarrow &\int_{0}^{t}\int_{%
\mathcal{O}}\rho {div}\left[ Q\left( \xi \right) \nabla e_{j}\right]
d\xi ds\text{,}
\end{eqnarray*}%
$\widetilde{\mathbb{P}}$-a.s. for all $t\in \left[ 0,T\right] .$

In order to identify the limit, we can use the fact that $\Psi $ and $g\circ 
\widetilde{\gamma }^{-1}$ are maximal monotone functions on $\mathbb{R}$.
More precisely, this implies that $\Delta \Psi $ and $\Delta \left( g\circ 
\widetilde{\gamma }^{-1}\right) $ are maximal monotones in $H^{-1}\left( 
\mathcal{O}\right) $ and we can use the weak-strong convergence property in
order to identify their limits in $H^{-1}\left( \mathcal{O}\right) $.

From the Lipschitz property of $\Psi $ and $g\circ \widetilde{\gamma }^{-1}$
and using (\ref{Ito1}) we have that 
\begin{equation*}
\left( \Delta \Psi \left( \widetilde{X}^{\left( n\right) }\left( s\right)
\right) \right) _{n\in \mathbb{N}}\text{ and }\left( \Delta \left( g\circ 
\widetilde{\gamma }^{-1}\right) \left( \widetilde{X}^{\left( n\right)
}\left( s\right) \right) \right) _{n\in \mathbb{N}}
\end{equation*}%
are bounded in $L^{2}\left( \left( 0,T\right) ;H^{-1}\left( \mathcal{O}%
\right) \right) ,\quad \mathbb{P}-a.s.$. Therefore, we are able to exhibit some subsequences (still denoted by $n$ for simplicity), that are weakly convergent in $L^{2}\left( \left( 0,T\right) ;H^{-1}\left( 
\mathcal{O}\right) \right) ,~\mathbb{P}$-a.s. Combining this with the strong
convergence in $L^{2}\left( \left( 0,T\right) ;H^{-1}\left( \mathcal{O}%
\right) \right) ,~\mathbb{P}$-a.s from (\ref{star}) we can identify the
limits.

We can show now the convergence of the stochastic integrals. In order to
apply Lemma 8 from \cite{Flandoli1}, let us denote by
\begin{equation*}
G^{n}=\sum_{k=1}^{n}\alpha _{k}\left( \eta \left( \widetilde{\gamma }%
^{-1}\left( \widetilde{X}^{\left( n\right) }\right) \right) ,\sigma
_{k}\cdot \nabla e_{j}\right) _{2},
\end{equation*}%
and 
\begin{equation*}
G=\sum_{k=1}^{\infty }\alpha _{k}\left( \eta \left( \widetilde{\gamma }%
^{-1}\left( \widetilde{X}\right) \right) ,\sigma _{k}\cdot \nabla
e_{j}\right) _{2},
\end{equation*}%
and it's sufficient to prove that%
\begin{equation*}
G^{n}\longrightarrow G\text{ in }L^{2}\left( 0,T;\mathbb{R}\right) ,\text{ }%
\mathbb{P}\text{-a.s.}
\end{equation*}

We have%
\begin{eqnarray*}
&&\int_{0}^{t}\sum_{k=1}^{n}\alpha _{k}\left( \eta \left( \widetilde{\gamma 
}^{-1}\left( \widetilde{X}^{\left( n\right) }\right) \right) -\eta \left( 
\widetilde{\gamma }^{-1}\left( \widetilde{X}\right) \right) ,\sigma
_{k}\cdot \nabla e_{j}\right) _{2}^{2}ds \\
&=&\int_{0}^{t}\left( \left( \eta \left( \widetilde{\gamma }^{-1}\left( 
\widetilde{X}^{\left( n\right) }\right) \right) -\eta \left( \widetilde{%
\gamma }^{-1}\left( \widetilde{X}\right) \right) \right) ^{2}\left( \nabla
e_{j}\right) ^{t},Q \nabla e_{j}\right) _{2}ds \\
&=&\int_{0}^{t}~_{H_{0}^{1}\left( \mathcal{O}\right) }\left( \left( \eta
\left( \widetilde{\gamma }^{-1}\left( \widetilde{X}^{\left( n\right)
}\right) \right) -\eta \left( \widetilde{\gamma }^{-1}\left( \widetilde{X}%
\right) \right) \right) ^{2},\left( \nabla e_{j}\right) ^{t}Q
\nabla e_{j}\right) _{H^{-1}\left( \mathcal{O}\right) }.
\end{eqnarray*}

By the same argument as in the identification of the limits from above, one easily shows that
\begin{equation*}
\eta \left( \widetilde{\gamma }^{-1}\left( \widetilde{X}^{\left( n\right)
}\right) \right) \longrightarrow \eta \left( \widetilde{\gamma }^{-1}\left( 
\widetilde{X}\right) \right) \text{, in }L^{2}\left( 0,T;H_{0}^{1}\left( 
\mathcal{O}\right) \right) ,\text{ }\mathbb{P}\text{-a.s.}
\end{equation*}

From the Sobolev embedding theorem we have that $H_{0}^{1}\left( \mathcal{O}%
\right) \subset L^{4}\left( \mathcal{O}\right) $, and, therefore%
\begin{equation*}
\left( \eta \left( \widetilde{\gamma }^{-1}\left( \widetilde{X}^{\left(
n\right) }\right) \right) -\eta \left( \widetilde{\gamma }^{-1}\left( 
\widetilde{X}\right) \right) \right) ^{2}\longrightarrow 0\text{, weakly in }%
L^{2}\left( 0,T;L^{2}\left( \mathcal{O}\right) \right) ,\text{ }\mathbb{P}%
\text{-a.s.}
\end{equation*}%
Keeping in mind that $\left( \nabla e_{j}\right) ^{t}Q\nabla
e_{j}\in L^{2}\left( \mathcal{O}\right) $ we can pass to the limit in
probability in the stochastic integral and apply Lemma 8 from \cite%
{Flandoli1}. Since the convergence in probability implies convergence almost
sure on a subsequence, by reasoning on this subsequence (still indexed by $n$ to make the arguments easier to read), we obtain that 
\begin{eqnarray*}
\left( \widetilde{X}\left( t\right) ,e_{j}\right) _{2} &=&\left(
x,e_{j}\right) _{2}+\int_{0}^{t}\int_{\mathcal{O}}\Psi \left( \widetilde{X}%
\left( s\right) \right) \Delta e_{j}d\xi ds \\
&&+\int_{0}^{t}\int_{\mathcal{O}}g\left( \widetilde{\gamma }%
^{-1}\left( \widetilde{X}\right) \right) {div}\left[ Q\left( \xi
\right) \nabla e_{j}\right] d\xi ds \\
&&+\sum_{k=1}^{\infty }\int_{0}^{t}\alpha _{k}\left( \eta \left( \widetilde{%
\gamma }^{-1}\left( \widetilde{X}\right) \right) ,\sigma _{k}\cdot \nabla
e_{j}\right) _{2}d\widetilde{\beta }_{s}^{k},\text{ }\mathbb{P}-a.s.
\end{eqnarray*}%
and the proof is complete.
\end{proof}
\subsection{Proofs of the Intermediate Lemmas \ref{Lemma1}, \ref{Lemma2}}\label{SubsLem}
\begin{proof}[Proof of Lemma \ref{Lemma1}]
The definition of the solution yields
\begin{equation}\label{est1}
\begin{split}
\left( X_{t}^{\left( n\right) }-X_{s}^{\left( n\right) },e_{j}\right) _{2}
=&\int_{s}^{t}\int_{\mathcal{O}}\left( \Psi \left( X_{l}^{\left( n\right)
}\right) \right) \Delta e_{j}d\xi dl  \\
&+\int_{s}^{t}\int_{\mathcal{O}}g\left( \widetilde{\gamma }%
^{-1}\left( X_{l}^{\left( n\right) }\right) \right) div\left[ Q\nabla e_{j}\right] d\xi dl \\
&+\sum_{k=1}^{\infty }\int_{s}^{t}\alpha _{k}\left( \eta \left( \widetilde{%
\gamma }^{-1}\left( X_{l}^{\left( n\right) }\right) \right) ,\sigma
_{k}\cdot \nabla e_{j}\right) _{2}d\beta _{k}\left( l\right)\\
=&I_{s,t}^{1}+I_{s,t}^{2}+I_{s,t}^{3}. 
\end{split}
\end{equation}
We now turn to the first term and write
\begin{eqnarray*}
\mathbb{E}\left[ \left\vert I_{s,t}^{1}\right\vert ^{r}\right] &= &%
\mathbb{E}\left[ \left\vert \int_{s}^{t}\int_{\mathcal{O}}\left( \Psi \left(
X_{l}^{\left( n\right) }\right) \right) \Delta e_{j}d\xi dl\right\vert ^{r}%
\right] \\
&\leq &\mathbb{E}\left[ \left\vert \int_{s}^{t}\left\Vert \Psi \left(
X_{l}^{\left( n\right) }\right) \right\Vert _{2}\left\Vert \Delta
e_{j}\right\Vert _{\infty }dl\right\vert ^{r}\right] \\
&\leq &C\lambda _{j}^{r}\mathbb{E}\left[ \left\vert \int_{s}^{t}\left\Vert
\Psi \left( X_{l}^{\left( n\right) }\right) \right\Vert _{2}dl\right\vert
^{r}\right] \\
&\leq &C\lambda _{j}^{r}\mathbb{E}\left[ \left\vert \int_{s}^{t}\left\Vert
X_{l}^{\left( n\right) }\right\Vert _{2}dl\right\vert ^{r}\right] \\
&\leq &C\lambda _{j}^{r}\left\vert t-s\right\vert ^{r}.
\end{eqnarray*}
As we have aleready mentioned after \eqref{Ito1}, the $C$ constant can be chosen independently of $n\geq 1$ (and, of course, of the $j$ index, as well as the time parameters $t,s$).
For the second term, we have, owing to classical Cauchy-Schwarz inequalities,%
\begin{eqnarray*}
\mathbb{E}\left[ \left\vert I_{s,t}^{2}\right\vert ^{r}\right] &= &%
\mathbb{E}\left[ \left\vert \int_{s}^{t}\int_{\mathcal{O}}g\left( 
\widetilde{\gamma }^{-1}\left( X_{l}^{\left( n\right) }\right) \right) {%
div}\left[ Q \nabla e_{j}\right] d\xi dl\right\vert ^{r}%
\right] \\
&\leq &\mathbb{E}\left[ \left\vert \int_{s}^{t}\left\Vert
g\left( \widetilde{\gamma }^{-1}\left( X_{l}^{\left( n\right) }\right)
\right) \right\Vert _{2}\left\Vert {div}\left[ Q
\nabla e_{j}\right] \right\Vert _{2}dl\right\vert ^{r}\right] ,
\end{eqnarray*}%
and by using similar arguments as for the first term (i.e., invoking the Lipschitz continuity of $g\circ\tilde{\gamma}^{-1}$ yielding $\mid g\circ\tilde{\gamma}^{-1}(x)\mid\leq \mid g\circ\tilde{\gamma}^{-1}(0)\mid+C\mid x\mid$), we obtain that%
\begin{equation*}
\mathbb{E}\left[ \left\vert I_{s,t}^{2}\right\vert ^{r}\right] \leq
C\left\vert t-s\right\vert ^{r}\left\Vert {div}\left[ Q\nabla e_{j}\right] \right\Vert _{2}^{r}.
\end{equation*}

Finally, for the last term, by employing Burkholder-Davis-Gundy inequality and further arguing similarly, we have%
\begin{eqnarray*}
\mathbb{E}\left[ \left\vert I_{s,t}^{3}\right\vert ^{r}\right] &= &%
\mathbb{E}\left[ \left\vert \sum_{k=1}^{n }\alpha
_{k}\int_{s}^{t}\left( \eta \left( \widetilde{\gamma }^{-1}\left(
X_{l}^{\left( n\right) }\right) \right) ,\sigma _{k}\cdot \nabla
e_{j}\right) _{2}d\beta _{k}\left( l\right) \right\vert ^{r}\right] \\
&\leq &C\mathbb{E}\left[ \left\{\int_{s}^{t}\left( \eta \left( \widetilde{\gamma }%
^{-1}\left( X_{l}^{\left( n\right) }\right) \right) ,\sum_{k=1}^{n
}\alpha _{k}\sigma _{k}\cdot \nabla e_{j}\right) _{2}^{2}dl\right\} ^{\frac{r%
}{2}}\right] \\
&\leq &C\mathbb{E}\left[\left\{ \int_{s}^{t}\left\Vert \eta \left( \widetilde{\gamma 
}^{-1}\left( X_{l}^{\left( n\right) }\right) \right) \right\Vert
_{2}^{2}\left\Vert \sum_{k=1}^{\infty }\alpha _{k}\sigma _{k}\cdot \nabla
e_{j}\right\Vert _{2}^{2}dl\right\} ^{\frac{r}{2}} \right]\\
&\leq &C\left\vert t-s\right\vert ^{\frac{r}{2}}\left\Vert
\sum_{k=1}^{\infty }\alpha _{k}\sigma _{k}\cdot \nabla e_{j}\right\Vert
_{2}^{r} \\
&\leq &C\left\vert t-s\right\vert ^{\frac{r}{2}}\lambda _{j}^{\frac{r}{2}}.
\end{eqnarray*}%
Recall that the series above is convergent owing to the assumptions we imposed at the beginning.

Going back to estimate (\ref{est1}), we obtain that,%
\begin{eqnarray*}
&&\mathbb{E}\left[ \left\vert \left( X_{t}^{\left( n\right) }-X_{s}^{\left(
n\right) },e_{j}\right) _{2}\right\vert ^{r}\right] \medskip \\
&\leq &C\left\vert t-s\right\vert ^{\frac{r}{2}}\left( \lambda
_{j}^{r}+\left\Vert {div}\left[ Q\left( \xi \right) \nabla e_{j}%
\right] \right\Vert _{2}^{r}+\lambda _{j}^{\frac{r}{2}}\right) .
\end{eqnarray*}
\end{proof}

\medskip

\begin{proof}[Proof of Lemma \ref{Lemma2}]
Let $\varepsilon\in (0,1)$. Using Hölder's inequality with $\frac{r}{2},\ \frac{r}{r-2}$, we get
\begin{eqnarray*}
\mathbb{E}\left[ \left\Vert X_{t}^{\left( n\right) }-X_{s}^{\left(
n\right) }\right\Vert _{H^{-\beta }}^{r}\right] \medskip 
&=&\mathbb{E}\left[ \left(\sum_{j=1}^{\infty }\frac{\left( X_{t}^{\left( n\right)
}-X_{s}^{\left( n\right) },e_{j}\right) _{2}^{2}}{\lambda _{j}^{\beta }}%
\right) ^{\frac{r}{2}}\right]\medskip \\
&= &\mathbb{E}\left[ \left(\sum_{j=1}^{\infty }\frac{1}{\lambda _{j}^{\frac{%
\left( 1+\varepsilon \right) \left( r-2\right) }{r}}}\frac{\left(
X_{t}^{\left( n\right) }-X_{s}^{\left( n\right) },e_{j}\right) _{2}^{2}}{%
\lambda _{j}^{\beta -\frac{\left( 1+\varepsilon \right) \left( r-2\right) }{r%
}}}\right) ^{\frac{r}{2}}\right]\medskip \\
&\leq &\left( \sum_{j=1}^{\infty }\frac{1}{\lambda _{j}^{\left(
1+\varepsilon \right) }}\right) ^{\frac{r-2}{2}}\left( \sum_{j=1}^{\infty }%
\frac{\mathbb{E}\left( X_{t}^{\left( n\right) }-X_{s}^{\left( n\right)
},e_{j}\right) _{2}^{r}}{\lambda _{j}^{\frac{r\beta }{2}-\frac{\left(
1+\varepsilon \right) \left( r-2\right) }{\textcolor{red}{2}}}}\right) \\
&\leq &C\left\vert t-s\right\vert ^{\frac{r}{2}}\left( \sum_{j=1}^{\infty }%
\frac{\left( \lambda _{j}^{r}+\left\Vert div\left[ Q\ \nabla e_{j}\right] \right\Vert _{2}^{r}+\lambda _{j}^{%
\frac{r}{2}}\right) }{\lambda _{j}^{\frac{r\beta }{2}-\frac{\left(
1+\varepsilon \right) \left( r-2\right) }{\textcolor{red}{2}}}}\right) .
\end{eqnarray*}

Keeping in mind the assumptions on the operator $Q$ we have that 
\begin{equation*}
\left\Vert div\left[ Q \nabla e_{j}\right]
\right\Vert _{2}^{r}\leq \gamma ^{\frac{r}{2}}\left( \left\Vert
e_{j}\right\Vert _{H_{0}^{1}\left( \mathcal{O}\right) }^{2}+\left\Vert
e_{j}\right\Vert _{H^{2}\left( \mathcal{O}\right) }^{2}\right) ^{\frac{r}{2}%
}\leq C\gamma ^{\frac{r}{2}}\left(\lambda _{j}^{\frac{r}{2}}+\lambda _{j}^{r}\right),
\end{equation*}%
and therefore%
\begin{eqnarray*}
\mathbb{E}\left[ \left\Vert X_{t}^{\left( n\right) }-X_{s}^{\left( n\right)
}\right\Vert _{H^{-\beta }}^{r}\right] &\leq &C\left\vert t-s\right\vert ^{%
\frac{r}{2}}\left( \sum_{j=1}^{\infty }\frac{\lambda _{j}^{r}+\lambda _{j}^{\frac{r}{2}}}{\lambda _{j}^{%
\frac{r\beta }{2}-\frac{\left( 1+\varepsilon \right) \left( r-2\right) }{\textcolor{red}{2}}}}%
\right) \medskip \\
&\leq &C\left\vert t-s\right\vert ^{\frac{r}{2}},
\end{eqnarray*}%
since $\frac{r\beta }{2}-\frac{\left( 1+\varepsilon \right) \left(
r-2\right) }{2}-r>1+\varepsilon $ for $\beta >4,~\varepsilon \in \left(
0,1\right) $ and $r\geq 4.$
\end{proof}

\medskip

\begin{proof}[Sketch of the Proof of Lemma \ref{Lemma3}]
As we have already mentioned, we borrow the arguments from Lemma 3.5 from \cite%
{Flandoli2} and we only hint to the main ideas.

Since $\widetilde{X}^{\left( n\right) }$ has the same law as $X^{\left(
n\right) }$ we have from the previous estimates that 
\begin{equation*}
\underset{t\in \left[ 0,T\right] }{\sup }\left\Vert \widetilde{X}^{\left(
n\right) }\left( t\right) \right\Vert _{2}^{2}\leq \left\Vert
X_{0}\right\Vert _{2}^{2},\quad \widetilde{\mathbb{P}}-a.s
\end{equation*}%
and therefore there exist a set $\Gamma \subset \widetilde{\Omega }$ of full
measure such that, for every $\omega \in \Gamma $ we have 
\begin{equation*}
\underset{t\in \left[ 0,T\right] }{\sup }\left\Vert \widetilde{X}^{\left(
n\right) }\left( t,\omega \right) \right\Vert _{2}^{2}\leq \left\Vert
X_{0}\right\Vert _{2}^{2},\quad \widetilde{\mathbb{P}}-a.s.
\end{equation*}

Let us fix $\omega \in \Gamma $ and by the relation above we have that the
sequence $\left\{ \widetilde{X}^{\left( n\right) }\left( t,\omega \right)
\right\} _{n}$ in bounded in $L^{\infty }\left( 0,T;L^{2}\left( \mathcal{O}%
\right) \right) $ and so we can extract a subsequence which is weak*
convergent in $L^{\infty }\left( 0,T;L^{2}\left( \mathcal{O}\right) \right) $
and therefore also in $L^{\infty }\left( 0,T;H^{-1}\left( \mathcal{O}\right)
\right) $ which implies by (\ref{star}) that the limit has to be $\widetilde{%
X}$ and 
\begin{equation*}
\left\Vert \widetilde{X}\left( \cdot ,\omega \right) \right\Vert _{L^{\infty
}\left( 0,T;L^{2}\left( \mathcal{O}\right) \right) }\leq \underset{n}{\lim
\inf }\left\Vert \widetilde{X}^{\left( n\right) }\left( \cdot ,\omega
\right) \right\Vert _{L^{\infty }\left( 0,T;L^{2}\left( \mathcal{O}\right)
\right) }\leq \left\Vert X_{0}\right\Vert _{2}^{2},
\end{equation*}

In particular there exist a subset $S_{\omega }\subset \left[ 0,T\right] $
of full Lebesgue measure such that $\left\Vert \widetilde{X}\left( s,\omega
\right) \right\Vert _{2}^{2}\leq \left\Vert X_{0}\right\Vert _{2}^{2}$ for
every $s\in S_{\omega }.$ Finally, for $t\in \left[ 0,T\right] \setminus
S_{\omega }$ and a sequence $t_{n}\rightarrow t$ with $t_{n}\in S_{\omega }$
we get by classical theorey that 
\begin{equation*}
\left\Vert \widetilde{X}\left( t,\omega \right) \right\Vert _{L^{\infty
}\left( 0,T;L^{2}\left( \mathcal{O}\right) \right) }\leq \underset{n}{\lim
\inf }\left\Vert \widetilde{X}\left( t_{n},\omega \right) \right\Vert
_{L^{\infty }\left( 0,T;L^{2}\left( \mathcal{O}\right) \right) }\leq
\left\Vert X_{0}\right\Vert _{2}^{2},
\end{equation*}%
and therefore%
\begin{equation*}
\underset{t\in \left[ 0,T\right] }{\sup }\left\Vert \widetilde{X}\left(
t,\omega \right) \right\Vert _{2}^{2}\leq \left\Vert X_{0}\right\Vert
_{2}^{2},\quad \widetilde{\mathbb{P}}-a.s.
\end{equation*}

By the same argument as in \cite{Flandoli2} we have that $\widetilde{X}$ has 
$\widetilde{\mathbb{P}}$-a.s. weakly continuous trajectories in $L^{2}\left( 
\mathcal{O}\right) $ and the proof of the Lemma is complete.
\end{proof}
\section*{Acknowledgments}
I.C. was supported in parts by the DEFHY3GEO project funded by Région Normandie and European Union with ERDF fund (convention 21E05300).\\
The research of F.F. is funded by the European Union, ERC NoisyFluid, No. 101053472.\\
D.G. acknowledges financial support from National Sciences and Engineering Research Council (NSERC), Canada, 
Grant/Award Number: RGPIN-2025-03963.
\bigskip

\subsection*{Submission Statement}
\noindent The work presented here has not been published previously,  it is not under consideration for publication elsewhere.\\
The publication is approved by all authors and by the responsible authorities where the work was carried out. If accepted, it will not be published elsewhere in the same form, in English or in any other language, including electronically without the written consent of the copyright-holder.
\subsection*{Declaration of Interest}
The authors have no competing interest to declare.
\subsection*{Declaration of Generative AI in Scientific Writing}
The paper makes no use of generative AI.
\subsection*{Author Contributions}
\textbf{Ioana Ciotir}: Formal analysis; Funding acquisition; Investigation; Methodology; Writing-original draft;\\
\textbf{Franco Flandoli}: Formal analysis; Funding acquisition; Investigation; Methodology; Writing-original draft;\\
\textbf{Dan Goreac}: Formal analysis; Funding acquisition; Investigation; Methodology; Writing-original draft.

\subsection*{Data Availability Statement}
\noindent Data sharing is not applicable to this article as no new data were created or analyzed in this study.

\bigskip

\end{document}